 \newtheorem{thm}{Theorem}[section]
 \newtheorem{cor}[thm]{Corollary}
 \newtheorem{lem}[thm]{Lemma}
 \newtheorem{prop}[thm]{Proposition}
 \theoremstyle{definition}
 \newtheorem{defn}[thm]{Definition}
 \theoremstyle{remark}
 \newtheorem*{ex}{Example}
 \numberwithin{equation}{section}
\begin{document}

%
%
%
%
%
%
%
%
%

\title[Direct Integrals of Strongly Irreducible Operators]
 {Direct Integrals of Strongly Irreducible Operators}

\author[C. Jiang]{Chunlan Jiang}

\address{
Department of Mathematics\\
Hebei Normal University\\
Shijiazhuang 050016\\
P.R.China}

\email{cljiang@hebtu.edu.cn}


\author[{R. Shi}]{Rui Shi$^{\ast}$}
\address{
Department of Mathematics\\
Hebei Normal University\\
Shijiazhuang 050016\\
P.R.China}

\email{littlestoneshg@gmail.com}

\thanks{The authors were supported in part by NSFC Grant (No.10731020).\\
$^{\ast}${Corresponding author}}


\subjclass[2000]{Primary 47A67; Secondary 47A15, 47C15}

\keywords{Direct integral, strongly irreducible operator, masa}


\begin{abstract}

Strongly irreducible operators can be considered as building blocks
for bounded linear operators on complex separable Hilbert spaces.
Many bounded linear operators can be written as direct sums of at
most countably many strongly irreducible operators. In this paper,
we show that a bounded linear operator $A$ is similar to a direct
integral of strongly irreducible operators if its commutant $\{A\}'$
contains a bounded maximal abelian set of idempotents. We find that
bounded linear operators which are similar to direct integrals of
strongly irreducible operators form a dense subset of $\mathscr
{L}(\mathscr {H})$ in the operator norm.

\end{abstract}

\maketitle

\section{Introduction}

Throughout this paper, all Hilbert spaces discussed are
\textit{complex and separable} while all operators are bounded
linear operators on the Hilbert spaces. Let $\mathscr {H}$ be a
Hilbert space, and let $\mathscr {L}(\mathscr {H})$ be the set of
bounded linear operators on $\mathscr {H}$. Every projection here is
self-adjoint while an idempotent may be not. Unless otherwise
stated, the term \textit{algebra} will always refer to a unital
subalgebra of $\mathscr {L}(\mathscr {H})$ which is closed in the
strong operator topology.

An operator $A$ on a Hilbert space $\mathscr {H}$ is said to be
{\textit{irreducible}} if its commutant $\{A\}'\equiv \{B\in\mathscr
{L}(\mathscr {H}):AB=BA\}$ contains no projections other than $0$
and $I$, introduced by P. Halmos in \cite{Halmos}. (The separability
assumption is necessary because on a non-separable Hilbert space
every operator is reducible.) An operator $A$ on a Hilbert space
$\mathscr {H}$ is said to be {\textit{strongly irreducible}} if
$XAX^{-1}$ is irreducible for every invertible operator
$X\in\mathscr {L}(\mathscr {H})$ \cite{Gilfeather}.

Strongly irreducible operators are irreducible but the converse is
not true in general.  Jordan blocks, the standard unilateral shift
operator, unicellular operators and Cowen-Douglas operators of index
1 are classical examples of strongly irreducible operators.

On finite dimensional Hilbert spaces, every operator can be uniquely
written as an orthogonal direct sum of irreducible operators up to
unitary equivalence. Furthermore, the Jordan canonical form theorem
shows that every operator on a finite dimensional Hilbert space can
be uniquely written as a (Banach) direct sum of Jordan blocks up to
similarity. A natural question is whether these results can be
generalized to operators on infinite dimensional Hilbert spaces. In
the past two decades, Zejian Jiang, D. A. Herrero, Chekao Fang,
Chunlan Jiang, Zongyao Wang, Peiyuan Wu, Youqing Ji, Junsheng Fang
and many other mathematicians did a lot of work around this question
as in \cite{Jiang_1, Jiang_2, Jiang_3, Jiang_4, Jiang_5, Jiang_6,
Watatani_1, Watatani_2, Watatani_3}. However, for many operators on
an infinite dimensional Hilbert space, such as multiplication
operators, their commutants may contain no minimal idempotents. For
these operators, direct sums need to be replaced by direct integrals
with some suitable measures. Therefore we briefly recall the related
concepts about ``\textit{direct integrals of Hilbert spaces and
operators}'' after the following result proved by E. A. Azoff, C. K.
Fong and F. Gilfeather in 1976 \cite{Azoff}.

\begin{thm}[Theorem AFG] Let $\mathscr {U}$ be a subalgebra of
$\mathscr {L}(\mathscr {H})$ closed in the strong operator topology.
Then the decomposition $\mathscr {U}\thicksim
\int^{\oplus}_{\Lambda} \mathscr {U}(\lambda)\mu(d\lambda)$ is
maximal if and only if almost all the algebras $\{\mathscr
{U}(\lambda)\}$ are irreducible.
\end{thm}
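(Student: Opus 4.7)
The plan is to reformulate ``maximality of the decomposition'' as an algebraic condition on the diagonal algebra. Let $\mathscr{D}\cong L^{\infty}(\Lambda,\mu)$ denote the diagonal von Neumann subalgebra associated with the decomposition, so that $\mathscr{D}\subseteq\mathscr{U}'$. Refining the direct integral corresponds precisely to enlarging $\mathscr{D}$ to a strictly larger abelian von Neumann subalgebra of $\mathscr{U}'$, so the decomposition is maximal if and only if $\mathscr{D}$ is a masa in $\mathscr{U}'$. By von Neumann's characterization of decomposable operators, an element of $\mathscr{U}'$ commutes with $\mathscr{D}$ precisely when it can be written as $T=\int^{\oplus}_{\Lambda}T(\lambda)\,\mu(d\lambda)$ with $T(\lambda)\in\mathscr{U}(\lambda)'$ almost everywhere. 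Thus the masa condition translates into the statement that every decomposable projection in $\mathscr{U}'$ must already lie in $\mathscr{D}$.

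For the ``$\Leftarrow$'' direction I would argue by contrapositive. Suppose the decomposition is not maximal; then there is a projection $P\in\mathscr{U}'$ that commutes with $\mathscr{D}$ but does not belong to $\mathscr{D}$. Disintegrate $P=\int^{\oplus}_{\Lambda}P(\lambda)\,\mu(d\lambda)$ with each $P(\lambda)$ a projection in $\mathscr{U}(\lambda)'$. If $P(\lambda)\in\{0,I_{\mathscr{H}(\lambda)}\}$ almost everywhere, then $P$ lies in $\mathscr{D}$, a contradiction. Hence on a set of positive measure $P(\lambda)$ is a nontrivial reducing projection for $\mathscr{U}(\lambda)$, so $\mathscr{U}(\lambda)$ fails to be irreducible on that set.

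The ``$\Rightarrow$'' direction carries the main difficulty, which is a measurable selection problem. Assume the decomposition is maximal and, toward a contradiction, that the set $E:=\{\lambda\in\Lambda:\mathscr{U}(\lambda)\text{ is reducible}\}$ has positive measure. One needs to produce a measurable field $\lambda\mapsto P(\lambda)$ on $E$ with $P(\lambda)$ a nontrivial projection in $\mathscr{U}(\lambda)'$; then $P:=\int^{\oplus}_{E}P(\lambda)\,\mu(d\lambda)\oplus 0$ is a decomposable projection in $\mathscr{U}'$ outside $\mathscr{D}$, contradicting the masa property. I expect the main obstacle to be this measurable selection step, which I would handle by appealing to a von Neumann--Aumann style cross-section theorem applied to the Borel set of nontrivial projections inside the measurable field of commutants $\lambda\mapsto\mathscr{U}(\lambda)'$. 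Verifying that $\lambda\mapsto\mathscr{U}(\lambda)'$ is itself a measurable field of von Neumann algebras, and that its projection lattice admits a nontrivial Borel section on $E$, is standard but genuinely technical, and forms the essential content of the argument.
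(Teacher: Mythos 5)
Theorem AFG is quoted in this paper as a known result from Azoff--Fong--Gilfeather; the paper offers no proof of it, so there is no in-paper argument to compare against directly. That said, your outline is the standard one and coincides both with the original AFG argument and with the strategy this paper itself uses in Section 3 to prove its Main Theorem: identify ``maximality of the decomposition'' with the diagonal algebra $\mathscr{D}$ being maximal abelian in $\mathscr{U}'$, pass between decomposable elements of $\mathscr{U}'\cap\mathscr{D}'$ and measurable fields $\lambda\mapsto T(\lambda)\in\mathscr{U}(\lambda)'$ via a countable generating set, read off the easy direction by disintegrating a projection in $\mathscr{U}'\cap\mathscr{D}'\setminus\mathscr{D}$, and handle the hard direction by a measurable cross-section through the field of nontrivial reducing projections. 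Your reduction steps are sound; in particular, a projection field that is trivial almost everywhere is multiplication by a characteristic function and hence lies in $\mathscr{D}$, and a nontrivial measurable selection integrates to a decomposable projection commuting with $\mathscr{D}$ and with $\mathscr{U}$ but lying outside $\mathscr{D}$.

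The one place where you defer rather than argue is exactly where the paper (for its strongly irreducible analogue) does the work: Lemmas 3.10 and 3.11 show how to make the selection step rigorous. Concretely, one fixes a countable generating set $\{A_i\}$ with Borel representatives $\lambda\mapsto A_i(\lambda)$ and forms the relation $\mathscr{E}=\{(\lambda,P): P \text{ a nontrivial projection}, A_i(\lambda)P=PA_i(\lambda)\ \forall i\}$; since the projections of $\mathscr{L}(\mathscr{H}_n)$ form a bounded set on which composition is strongly continuous and the strong topology is Polish, $\mathscr{E}$ is Borel, its first projection (your set $E$) is measurable, and von Neumann's principle of measurable choice yields the desired Borel section. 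For projections this is actually simpler than the idempotent case treated in the paper, where one must first exhaust by the bounded sets $\mathscr{Q}_k$. So your proposal is correct in substance and follows the same route as the source it would be compared to; it is an outline rather than a complete proof only at the selection step, which is routine to fill in along the lines just indicated.
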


For the related concepts mentioned here, we follow \cite{Azoff,
Radjavi, Schwartz}. Once and for all, fix a sequence of Hilbert
spaces $\mathscr {H}_1\subset\mathscr
{H}_2\subset\cdots\subset\mathscr {H}_{\infty}$ with $\mathscr
{H}_n$ having dimension $n$ and $\mathscr {H}_{\infty}$ spanned by
the union of Hilbert spaces $\mathscr {H}_n$, for $1\leq n<\infty$.
Next, suppose we have a {\textit{partitioned measure space}}
$(\Lambda,\mu,\{\Lambda_{n}\}^{n=\infty}_{n=1})$. This means that
$\Lambda$ is a separable metric space, $\mu$ is (the completion of)
a regular Borel measure on $\Lambda$, and $\{\Lambda_{\infty},
\Lambda_{1}, \Lambda_{2}, \ldots\}$ is a Borel partition of
$\Lambda$, where some $\Lambda_{n}$s may be of measure zero. We also
assume that $\mu$ is $\sigma$-finite and $\Lambda$ is almost
$\sigma$-compact. Then we form the associated direct integral
Hilbert space $\mathscr {H}=\int^{\oplus}_{\Lambda} \mathscr
{H}(\lambda) \mu(d\lambda)$. This space consists of all (equivalence
classes of) measurable functions $f$ from $\Lambda$ into $\mathscr
{H}_{\infty}$ such that $f(\lambda)\in \mathscr {H}(\lambda)\equiv
\mathscr {H}_n$ for $\lambda\in \Lambda_n$ and
$\int_{\Lambda}\|f(\lambda)\|^2\mu(d\lambda)<\infty$. The element in
$\mathscr {H}$ represented by the function $\lambda\rightarrow
f(\lambda)$ is denoted by $\int^{\oplus}_{\Lambda}f(\lambda)\mu
(d\lambda)$. An  operator $A$ in $\mathscr {L}(\mathscr {H})$ is
said  to be {\textit{decomposable}} if there exists a strongly
$\mu$-measurable operator-valued function $A(\cdot)$ defined on
$\Lambda$ such that $A(\lambda)$ is an operator in $\mathscr
{L}(\mathscr {H}(\lambda))$ and
$(Af)(\lambda)=A(\lambda)f(\lambda)$, for all $f\in \mathscr {H}$.
We write $A\equiv\int^{\oplus}_{\Lambda}A(\lambda)\mu(d\lambda)$ for
the equivalence class corresponding to $A(\cdot)$.  If $A(\lambda)$
is a scalar multiple of the identity on $\mathscr {H}(\lambda)$ for
almost all $\lambda$, then $A$ is said to be {\textit{diagonal}}.
The collection of all diagonal operators is said to be the
{\textit{diagonal algebra}} of $\Lambda$. It is an abelian von
Neumann algebra.

As an application of Theorem AFG, J. Fang, C. Jiang and P. Wu
\cite{Fang} proved that every operator is a direct integral of
irreducible operators. One may ask whether every bounded linear
operator on a Hilbert space can be written as a direct sum of at
most countably many strongly irreducible operators. In \cite{Jiang
Z}, Zejian Jiang gave a negative answer by proving that every
self-adjoint operator without eigenvalues can not be written as a
direct sum of strongly irreducible operators.

An operator $A$ in $\mathscr {L}(\mathscr {H})$ is a direct integral
of strongly irreducible operators if there exists a partitioned
measure space $(\Lambda,\mu,\{\Lambda_n\}^{n=\infty}_{n=1})$ for
$\mathscr {H}$ such that $A$ is decomposable with respect to
$(\Lambda,\mu,\{\Lambda_n\}^{n=\infty}_{n=1})$ and $A(\lambda)\in
\mathscr {L}(\mathscr {H}(\lambda))$ is strongly irreducible almost
everywhere on $\Lambda$.

The main purpose of this paper is to develop a direct integral
theory for a large class of operators on infinite dimensional
Hilbert spaces, which can be considered as a generalization of the
Jordan canonical form theorem for operators on finite dimensional
Hilbert spaces. More precisely, we prove the following theorem.

\begin{thm}[Main Theorem]
Let $\mathscr {H}$ be a separable Hilbert space and let $A\in
\mathscr {L}(\mathscr {H})$. The operator $A$ is similar to a direct
integral of strongly irreducible operators if and only if its
commutant $\{A\}'$ contains a bounded maximal abelian set of
idempotents $\mathscr {P}$.
\end{thm}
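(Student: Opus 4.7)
The plan is to prove both implications, with necessity a fairly direct consequence of the definitions and sufficiency requiring the real work. For the necessity, assume $A = X\bigl(\int^{\oplus}_{\Lambda}A(\lambda)\mu(d\lambda)\bigr)X^{-1}$ with strongly irreducible fibers $A(\lambda)$. The diagonal algebra of the integral is abelian and sits in the commutant of the direct integral, and I would argue that its projection lattice $\mathscr{P}_{0}$ is already a maximal abelian set of idempotents there: any idempotent commuting with every diagonal operator is itself decomposable as $\int^{\oplus}E(\lambda)\mu(d\lambda)$, and strong irreducibility forces each $E(\lambda)\in\{0,I\}$ almost everywhere. Conjugating by $X$ then produces a bounded maximal abelian set of idempotents $\mathscr{P}=X\mathscr{P}_{0}X^{-1}\subset\{A\}'$.

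For the sufficiency, let $\mathscr{P}$ be a bounded maximal abelian set of idempotents in $\{A\}'$. I would first invoke the classical fact that a uniformly bounded Boolean algebra of idempotents on a separable Hilbert space is simultaneously similar to a Boolean algebra of projections: there exists an invertible $Y$ so that $Y\mathscr{P}Y^{-1}$ consists of self-adjoint projections. Replacing $A$ by $YAY^{-1}$, which preserves both hypothesis and conclusion, I may assume $\mathscr{P}$ itself consists of projections. Let $\mathscr{A}$ denote the abelian von Neumann algebra generated by $\mathscr{P}$. Every projection in $\mathscr{A}'\cap\{A\}'$ commutes with $\mathscr{P}$, hence lies in $\mathscr{P}\subset\mathscr{A}$ by maximality; since any von Neumann algebra is the weak closure of the linear span of its projections, this yields $\mathscr{A}'\cap\{A\}'=\mathscr{A}$, that is, $\mathscr{A}$ is a masa in $\{A\}'$.

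Next I would apply the standard direct integral representation of an abelian von Neumann algebra on a separable Hilbert space to obtain a partitioned measure space $(\Lambda,\mu,\{\Lambda_{n}\})$ and a unitary $U\colon\mathscr{H}\to\int^{\oplus}_{\Lambda}\mathscr{H}(\lambda)\mu(d\lambda)$ carrying $\mathscr{A}$ onto the diagonal algebra. Since $\mathscr{A}\subset\{A\}'$, the operator $UAU^{-1}$ commutes with the diagonal algebra and is therefore decomposable, so $UAU^{-1}=\int^{\oplus}_{\Lambda}A(\lambda)\mu(d\lambda)$. Applying Theorem AFG to a suitable decomposable algebra containing $A$ and the diagonal, the masa property of $\mathscr{A}$ shows that its fiber algebras are irreducible almost everywhere, whence each $A(\lambda)$ is irreducible on $\mathscr{H}(\lambda)$.

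The final step, which I expect to be the main obstacle, is to upgrade fiberwise irreducibility to strong irreducibility. Suppose towards a contradiction that $A(\lambda)$ fails to be strongly irreducible on some Borel set $\Lambda'$ of positive measure; then for each such $\lambda$ the commutant $\{A(\lambda)\}'$ contains an idempotent distinct from $0$ and $I$. To convert this fiberwise information into a single bounded decomposable idempotent, I would truncate by norm: let $\Lambda'_{k}$ be the set of $\lambda\in\Lambda'$ for which $\{A(\lambda)\}'$ carries a non-trivial idempotent of norm at most $k$. Granting measurability of these slices, one of them has positive measure, and a measurable selection theorem of Kuratowski--Ryll-Nardzewski type, applied to the multifunction $\lambda\mapsto\{E\in\{A(\lambda)\}':E=E^{2},\ E\neq 0,I,\ \|E\|\leq k\}$, furnishes a measurable selection $\lambda\mapsto E(\lambda)$. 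Extending by $0$ off $\Lambda'_{k}$ and integrating yields a bounded idempotent $E\in\{UAU^{-1}\}'$ commuting with the diagonal; pulling back by $U$ produces an idempotent in $\{A\}'$ commuting with $\mathscr{P}$, which by maximality must lie in $\mathscr{P}$ and hence act as $0$ or $I$ on each fiber, contradicting the choice of $E(\lambda)$. Verifying the Borel measurability of the multifunction and of the slicing sets $\Lambda'_{k}$ inside the direct integral model is the technical heart of the argument, and in all likelihood requires careful use of the standard Borel structure on the space of operators on $\mathscr{H}_{\infty}$ together with the measurability of $\lambda\mapsto A(\lambda)$.
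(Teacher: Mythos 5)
Your proposal is correct and follows essentially the same route as the paper: reduce to the case where $\mathscr{P}$ consists of projections via the bounded Boolean algebra of idempotents (the paper's Lemmas 3.1--3.3, using Wermer's theorem), observe that $W^*(\mathscr{P})$ is a masa in $\{A\}'$, decompose over that masa, and then rule out a positive-measure set of strongly reducible fibers by slicing the set of nontrivial commuting idempotents according to a norm bound $k$ and applying a measurable selection theorem to produce a bounded decomposable idempotent that maximality forces into $\mathscr{P}$, hence to be fiberwise trivial. The only cosmetic differences are your citation of Kuratowski--Ryll-Nardzewski in place of von Neumann's principle of measurable choice and your fuller treatment of the ``only if'' direction, which the paper declares clear.
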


This paper is organized as follows. In Section 2, we construct two
examples to show that not every operator is similar to a direct
integral of strongly irreducible operators. In Section 3, we develop
our theory of strongly irreducible decomposition and prove the Main
Theorem. In Section 4, we provide several applications. As an
important application, a large class of spectral operators can be
written as direct integrals of strongly irreducible operators.


\section{Examples Of Operators Which Are Not Similar To Direct
Integrals Of Strongly Irreducible Operators}

In this section, we construct two examples as mentioned above. It is
worth pointing out that both of them can be written as direct
integrals of irreducible operators.

\begin{ex}
Suppose
$$A=\bigoplus\limits^{\infty}_{i=1}A_{i}.\quad
A_i=\left(\begin{array}{cc}
\frac{1}{i}&1\\
0&-\frac{1}{2i}
\end{array}\right)\in M_2(\mathds{C}).$$
Then $A$ is not similar to a direct integral of strongly irreducible
operators. (Notice that each $A_i$ is irreducible in
$M_2(\mathds{C})$.)\\

We calculate the commutant of $A$. If $T_i\in\{A_i\}'$, then
$$T_i=\left(\begin{array}{cc}
t_1&\frac{2i}{3}(t_1-t_2)\\
0&t_2
\end{array}\right), t_1,t_2\in\mathds{C}.$$

If $T\in\{A\}'$, then $T$ can be expressed in the form
$$T=\left(\begin{array}{ccc}
T_{11}&T_{12}&\cdots\\
T_{21}&T_{22}&\cdots\\
\vdots&\vdots&\ddots\\
\end{array}\right),$$
where $T_{ij}A_{j}=A_{i}T_{ij}$.

If $i\neq j$, then by calculation we have $T_{ij}=0$ and
$$T=\left(\begin{array}{ccc}
T_{11}&0&\cdots\\
0&T_{22}&\cdots\\
\vdots&\vdots&\ddots\\
\end{array}\right).$$

If $P$ is an idempotent in $\{A\}'$, then $P$ has the form
$$P=\left(\begin{array}{ccc}
P_{1}&0&\cdots\\
0&P_{2}&\cdots\\
\vdots&\vdots&\ddots\\
\end{array}\right),$$
where $P_{i}$ is an idempotent in $\{A_{i}\}'$.

We obtain that $P_{i}$ has the following four forms
$$\left(\begin{array}{cc}
1&\frac{2i}{3}\\
0&0\\
\end{array}\right),
\left(\begin{array}{cc}
0&-\frac{2i}{3}\\
0&1\\
\end{array}\right),0_{2\times 2},I_{2\times 2}.$$

Since $P\in\{A\}'$ is bounded, $P_i$ must be $0$ or $I$ for all but
finitely many $i\in\mathds{N}$. Hence, all the idempotents in
$\{A\}'$ form the only maximal abelian set of idempotents $\mathscr
{P}$. But this set is unbounded.

If there exists an operator $B$ similar to $A$ and $B$ can be
written as a direct integral of strongly irreducible operators, then
every idempotent in the commutant of $B$ must be a projection. This
is a contradiction because $\mathscr {P}$ is unbounded.
\end{ex}

\begin{ex}
If $\mu$ is the Lebesgue measure supported on interval $[0,1]$.
Define $N_{\mu}$ on $L^2(\mu)$ by
$$(N_{\mu}f)(t)=(z\cdot f)(t)=t\cdot f(t),\ \forall f\in
L^2(\mu).$$ Note that there is no minimal idempotent in
$\{N_{\mu}\}'$.

Let $A$ be represented as a $2\times2$ matrix with operator entries
in the form
$$A=\left(\begin{array}{cc}
N_{\mu}&I\\
0&-\frac{1}{2}N_{\mu}\\
\end{array}\right)
\begin{array}{c}
L^2(\mu)\\
L^2(\mu)\\
\end{array}.$$ We show that $A$
is not similar to a direct integral of strongly irreducible
operators.

If $Q$ is an operator in $\{A\}'$, then $Q$ can be expressed in the
form
$$Q=\left(\begin{array}{cc}
Q_{11}&Q_{12}\\
Q_{21}&Q_{22}\\
\end{array}\right)
\begin{array}{c}
L^2(\mu)\\
L^2(\mu)\\
\end{array}.$$

The equation $QA=AQ$ becomes
$$\left(\begin{array}{cc}
Q_{11}N_{\mu}&Q_{11}-\frac{1}{2}Q_{12}N_{\mu}\\
Q_{21}N_{\mu}&Q_{21}-\frac{1}{2}Q_{22}N_{\mu}\end{array}\right)=
\left(\begin{array}{cc}
N_{\mu}Q_{11}+Q_{21}&N_{\mu}Q_{12}+Q_{22}\\
-\frac{1}{2}N_{\mu}Q_{21}&-\frac{1}{2}N_{\mu}Q_{22}\end{array}\right).$$

The equation $Q_{21}N_{\mu}=-\frac{1}{2}N_{\mu}Q_{21}$ implies
$Q_{21}=0$, by applying Proposition 6.10 in (\cite{Conway}, IX).

The equations $$\left\{\begin{array}{rcl}
Q_{11}N_{\mu}&=&N_{\mu}Q_{11}+Q_{21},\\
Q_{21}-\frac{1}{2}Q_{22}N_{\mu}&=&-\frac{1}{2}N_{\mu}Q_{22},\\
Q_{11}-\frac{1}{2}Q_{12}N_{\mu}&=&N_{\mu}Q_{12}+Q_{22},
\end{array}\right.$$
imply that $Q_{11},Q_{22}\in \{N_{\mu}\}'$ and
$Q_{11}-Q_{22}=N_{\mu}Q_{12}+\frac{1}{2}Q_{12}N_{\mu}$.

If $Q_{11}=Q_{22}=0$, then we have $Q_{12}=0$. Hence for any $Q$ in
$\{A\}'$, $Q_{12}$ is uniquely determined by $Q_{11}$ and $Q_{22}$.
Thus the commutant of $A$ is abelian, but the set of all the
idempotents in $\{A\}'$ is unbounded.

If there exists an operator $B$ similar to $A$ and $B$ can be
written as a direct integral of strongly irreducible operators, then
there is only one maximal abelian von Neumann algebra $\mathscr {D}$
in the commutant of $B$. We obtain the same contradiction as the
above example.

In particular, the collection $\mathscr {P}$ of projections in the
commutant of $A$ is abelian. The operator $N^{(2)}_{\mu}$ and
$\mathscr {P}$ generate the same maximal abelian von Neumann algebra
in $\{A\}'$. We can fix the partitioned measure space by
$N^{(2)}_{\mu}$. Thus as an application of Theorem AFG, $A$ can be
written as a direct integral of irreducible matrices in
$M_2(\mathds{C})$,
$$A=\int^{\oplus}_{[0,1]}\left(\begin{array}{cc}\lambda&1\\
0&-\frac{1}{2}\lambda\end{array}\right)\mu(d\lambda).$$

\end{ex}


\section{Proof Of The Main Theorem}

Notice that for each example in \S 2, the commutant $\{A\}'$ does
not contain a bounded maximal abelian set of idempotents. Therefore
in this section we assume that $\mathscr {H}$ is an infinite
dimensional Hilbert space, $A\in\mathscr {L}(\mathscr {H})$ and
there exists a bounded maximal abelian set of idempotents $\mathscr
{P}$ in $\{A\}'$. We show that $A$ is similar to a direct integral
of strongly irreducible operators.

First we show that there exists an invertible operator $X\in\mathscr
{L}(\mathscr {H})$ such that $XPX^{-1}$ is a projection for each
$P\in\mathscr {P}$. Secondly, we recall how to write a
non-self-adjoint algebra $\mathscr {A}$ in the form of a direct
integral with respect to an abelian von Neumann algebra in the
commutant of $\mathscr {A}$. Finally, we prove the Main Theorem.

The following three lemmas lead us to the first goal as stated
above.

\begin{lem}
The set $\mathscr {P}$ is an abelian Boolean algebra of idempotents.
\end{lem}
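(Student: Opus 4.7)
The plan is to verify that $\mathscr{P}$ is closed under the standard Boolean operations on commuting idempotents, namely meet $P \wedge Q = PQ$, join $P \vee Q = P + Q - PQ$, and complement $P^{\perp} = I - P$, together with containing $0$ and $I$. Since the problem already supplies that $\mathscr{P}$ is an abelian set of idempotents in $\{A\}'$, the content of the lemma is precisely this closure under the Boolean operations.

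First I would handle the identity and zero: both $0$ and $I$ lie in $\{A\}'$ and commute with every element of $\mathscr{P}$, so by the maximality hypothesis each must belong to $\mathscr{P}$. Next, given $P, Q \in \mathscr{P}$, the commutativity $PQ = QP$ makes the routine verifications
\[
(PQ)^2 = PQPQ = P^2 Q^2 = PQ, \qquad (I-P)^2 = I - 2P + P^2 = I - P,
\]
\[
(P + Q - PQ)^2 = P + Q - PQ
\]
automatic (expand and use $P^2 = P$, $Q^2 = Q$, $PQ = QP$). Thus $PQ$, $I-P$, and $P + Q - PQ$ are genuinely idempotents.

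I would then argue that each of these three operators lies in $\{A\}'$ and commutes with every element of $\mathscr{P}$. Membership in $\{A\}'$ follows because $\{A\}'$ is an algebra containing $P$ and $Q$, so it contains every polynomial in them. Commutation with every $R \in \mathscr{P}$ follows because $R$ commutes with both $P$ and $Q$ (by the abelianness of $\mathscr{P}$), hence with any polynomial in $P$ and $Q$. Invoking maximality of $\mathscr{P}$ then forces $PQ$, $I - P$, and $P + Q - PQ$ into $\mathscr{P}$.

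With these closure properties in hand, the Boolean algebra axioms (associativity, commutativity, absorption, distributivity, complementation laws) follow from the standard algebraic identities for commuting idempotents, so no separate verification is needed. There is no real obstacle here: the one small point to be careful about is making sure the newly constructed idempotents commute with the \emph{entire} set $\mathscr{P}$, not merely with $P$ and $Q$, which is exactly what triggers the maximality argument; boundedness of $\mathscr{P}$ plays no role in this lemma and will only be used later when producing the similarity $X$ that converts $\mathscr{P}$ into projections.
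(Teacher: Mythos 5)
Your proposal is correct and follows essentially the same route as the paper: both define the meet, join, and complement by $P\wedge Q=PQ$, $P\vee Q=P+Q-PQ$, $P^{c}=I-P$ and check closure of $\mathscr{P}$ under these operations. You are somewhat more explicit than the paper about why closure holds (the maximality argument forcing each new idempotent into $\mathscr{P}$), while the paper instead writes out the distributive law in detail; neither difference changes the substance of the argument.
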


\begin{proof}
We verify the definition of Boolean algebra.

(1) We verify that $\mathscr {P}$ is a lattice. For all
$P_1,P_2\in\mathscr {P}$, define the following operations
$$P_1\vee P_2\equiv P_1+P_2-P_1P_2,\quad P_1\wedge P_2\equiv P_1P_2.$$
The operations ``$\vee$'' and ``$\wedge$'' are closed in $\mathscr
{P}$. Thus $\mathscr {P}$ is a lattice.

(2) The set $\mathscr {P}$ includes $0$ and $I$.

(3) Define the complement operation as $P^{c}\equiv I-P,\ \forall
P\in \mathscr {P}$. Hence this operation is still closed in
$\mathscr {P}$.

(4) We verify that $\mathscr {P}$ satisfies the Distributive Law.
For all $P_1,P_2,P_3\in\mathscr {P}$,
$$\begin{array}{rcl}
P_1\wedge(P_2\vee P_3)&=&P_1(P_2+P_3-P_2P_3)\\
&=&P_1P_2+P_1P_3-P_1P_2P_3\\
(P_1\wedge P_2)\vee(P_1\wedge P_3)&=&P_1P_2+P_1P_3-P_1P_2P_3.
\end{array}$$
Therefore
$$P_1\wedge(P_2\vee P_3)=(P_1\wedge P_2)\vee(P_1\wedge
P_3).$$ On the other hand,
$$\begin{array}{rcl}
P_1\vee(P_2\wedge P_3)&=&P_1+P_2P_3-P_1P_2P_3;\\
(P_1\vee P_2)\wedge(P_1\vee P_3)&=&(P_1+P_2-P_1P_2)(P_1+P_3-P_1P_3)\\
&=& P_1+P_2P_3-P_1P_2P_3.\\
\end{array}$$
Therefore
$$P_1\vee(P_2\wedge P_3)=(P_1\vee P_2)\wedge(P_1\vee P_3).$$
\end{proof}

The following lemma shows that $\mathscr {P}$ possesses a nice
property.

\begin{lem} [\cite{Wermer}]
Let $E(\sigma)$ and $F(\eta)$ be two commuting spectral measures on
a Hilbert space $\mathscr {H}$, that is,
$$E(\sigma)F(\eta)=F(\eta)E(\sigma),$$ for every $\sigma$ and
$\eta$. Then there exists an invertible operator $X$ such that
$XE(\sigma)X^{-1}$ and $XF(\eta)X^{-1}$ are projections for every
$\sigma$ and $\eta$.
\end{lem}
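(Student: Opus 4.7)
My plan is to reduce the statement to the classical single-algebra version of Wermer's renorming theorem by amalgamating the two commuting spectral measures into one Boolean algebra of idempotents.

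First, I would form the Boolean algebra $\mathscr{B}$ generated inside $\mathscr{L}(\mathscr{H})$ by $\{E(\sigma)\}_{\sigma} \cup \{F(\eta)\}_{\eta}$ under the operations $P \wedge Q = PQ$, $P \vee Q = P + Q - PQ$, and $P^{c} = I - P$, exactly as in Lemma~3.1. Because $E$ and $F$ commute pointwise and every element of each family is an idempotent, $\mathscr{B}$ is a commutative family of idempotents, and its Boolean-algebra axioms are verified by the same distributive-law computation already carried out in the preceding lemma.

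Next, I would verify that $\mathscr{B}$ is uniformly norm-bounded. Every element of $\mathscr{B}$ can, by distributivity, be written as a finite disjoint Boolean sum of ``rectangles'' $E(\sigma_{i})F(\eta_{i})$; the individual uniform bounds on the ranges of the spectral measures $E$ and $F$, combined with their $\sigma$-additivity and the Boolean disjointness $E(\sigma_{i})F(\eta_{i}) \cdot E(\sigma_{j})F(\eta_{j}) = 0$ for $i \neq j$, yield a uniform norm estimate on such sums. This boundedness step is the main technical hurdle, since the norms of Boolean joins do not propagate trivially from the bounds on $E$ and $F$ in the non-self-adjoint setting.

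Finally, I would invoke the single-algebra Wermer renorming theorem: a bounded Boolean algebra of idempotents on a Hilbert space is similar to a Boolean algebra of self-adjoint projections, via an invertible $X \in \mathscr{L}(\mathscr{H})$ such that $XPX^{-1}$ is a self-adjoint projection for every $P \in \mathscr{B}$. The standard construction is to define an equivalent inner product on $\mathscr{H}$ with respect to which every element of $\mathscr{B}$ becomes self-adjoint, obtained by averaging the original inner product against an invariant mean on $\mathscr{B}$; the operator $X$ is then a bounded invertible intertwiner between the two inner products. Specializing this conclusion to the generators produces simultaneously self-adjoint projections $XE(\sigma)X^{-1}$ and $XF(\eta)X^{-1}$ for every $\sigma$ and $\eta$, which is exactly the desired statement.
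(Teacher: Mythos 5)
The paper offers no proof of this lemma: it is imported verbatim from Wermer's 1954 paper, so the only thing to compare your proposal against is Wermer's original argument, whose outline you do reproduce correctly (amalgamate the two measures into a single abelian Boolean algebra of idempotents, prove that this algebra is uniformly bounded, then apply the single-algebra renorming theorem). Your final step is fine: the bounded abelian group of involutions $\{2P-I:P\in\mathscr{B}\}$ is amenable, and averaging the inner product over it yields the invertible $X$ making every element of $\mathscr{B}$ a self-adjoint projection.

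The genuine gap is the boundedness step, which you rightly call ``the main technical hurdle'' and then do not clear. Writing a general element of $\mathscr{B}$ as a disjoint sum of rectangles $E(\sigma_i)F(\eta_i)$ and appealing to ``the individual uniform bounds, $\sigma$-additivity and Boolean disjointness'' is an assertion, not an argument: the coefficients $F(\eta_i)$ are operators rather than scalars, so none of the layer-cake estimates for $\sum_i c_iE(\sigma_i)$ apply, and the corresponding statement fails on general Banach spaces (two commuting bounded Boolean algebras of projections can generate an unbounded Boolean algebra), so some genuinely Hilbert-space input is unavoidable. The missing idea is to first apply the single-measure theorem to $E$ alone, so that after a preliminary similarity $E$ is projection-valued; every element of the combined algebra can then be written as $\sum_j E(\sigma_j)F(\eta_j)$ with the $\sigma_j$ pairwise disjoint, and orthogonality of the ranges of the $E(\sigma_j)$ together with commutativity gives
$$\Bigl\|\sum_j E(\sigma_j)F(\eta_j)x\Bigr\|^2=\sum_j\|E(\sigma_j)F(\eta_j)x\|^2=\sum_j\|F(\eta_j)E(\sigma_j)x\|^2\leq M_F^2\sum_j\|E(\sigma_j)x\|^2\leq M_F^2\|x\|^2,$$
where $M_F=\sup_{\eta}\|F(\eta)\|$ after the preliminary similarity. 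With this estimate in hand the rest of your proposal goes through: a single application of the renorming theorem to the now bounded combined algebra straightens $E$ and $F$ simultaneously.
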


In this lemma, a ``spectral measure'' on a Hilbert space $\mathscr
{H}$ means a function $E(\cdot):\Omega\rightarrow\mathscr
{L}(\mathscr {H})$, where $\Omega$ is the $\sigma$-algebra of Borel
sets in the complex plane. Note that for every $\sigma\in\Omega$,
$E(\sigma)$ is an idempotent and the image of $E$ is bounded in the
operator norm. One may see \cite{Wermer} for details. Note that the
image of $E$ is a bounded abelian Boolean algebra of idempotents.
Except for the spectral measure in Lemma 3.2, all spectral measures
mentioned in this article are projection-valued.

Now we want to verify that any similar transformation preserves the
maximality of $\mathscr {P}$ as an abelian set of idempotents.

Let $X\in \mathscr {L}(\mathscr {H})$ be an invertible operator.
Write
$$X\mathscr {P}X^{-1}\equiv\{XPX^{-1}:P\in\mathscr {P}\}.$$

\begin{lem}
The set $X\mathscr {P}X^{-1}$ is a maximal abelian set of
idempotents in $\{XAX^{-1}\}'$.
\end{lem}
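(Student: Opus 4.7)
The proposal is that this lemma is a routine conjugation argument, so the plan is simply to check each clause of the definition in turn, with the only substantive point being that maximality is preserved under a similarity transformation.

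First I would verify that every element of $X\mathscr{P}X^{-1}$ is an idempotent lying in $\{XAX^{-1}\}'$. For any $P\in\mathscr{P}$ one has $(XPX^{-1})^{2}=XP^{2}X^{-1}=XPX^{-1}$, so idempotency passes through conjugation; and $PA=AP$ gives $(XPX^{-1})(XAX^{-1})=XPAX^{-1}=XAPX^{-1}=(XAX^{-1})(XPX^{-1})$. Next I would check abelianness: for $P_{1},P_{2}\in\mathscr{P}$, $(XP_{1}X^{-1})(XP_{2}X^{-1})=XP_{1}P_{2}X^{-1}=XP_{2}P_{1}X^{-1}=(XP_{2}X^{-1})(XP_{1}X^{-1})$, using that $\mathscr{P}$ is abelian.

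The only step requiring a moment's thought is maximality. Suppose $Q$ is an idempotent in $\{XAX^{-1}\}'$ that commutes with every element of $X\mathscr{P}X^{-1}$. Set $\tilde Q:=X^{-1}QX$. Then $\tilde Q$ is an idempotent because $\tilde Q^{2}=X^{-1}Q^{2}X=\tilde Q$, and $\tilde Q\in\{A\}'$ because $QXAX^{-1}=XAX^{-1}Q$ yields $X^{-1}QX\cdot A=A\cdot X^{-1}QX$. Moreover, for every $P\in\mathscr{P}$,
\[
\tilde Q P=X^{-1}Q(XPX^{-1})X=X^{-1}(XPX^{-1})QX=P\tilde Q,
\]
so $\tilde Q$ commutes with all of $\mathscr{P}$. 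By maximality of $\mathscr{P}$ in $\{A\}'$, we get $\tilde Q\in\mathscr{P}$, hence $Q=X\tilde QX^{-1}\in X\mathscr{P}X^{-1}$.

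There is no real obstacle here: the argument is a bijective correspondence $P\leftrightarrow XPX^{-1}$ between the idempotents of $\{A\}'$ and those of $\{XAX^{-1}\}'$ that preserves products (and hence commutation), so maximality transports immediately. The only things to be careful about are invoking invertibility of $X$ in both directions and recording that products, idempotency, and commutation with $A$ respectively $XAX^{-1}$ are all conjugation-equivariant.
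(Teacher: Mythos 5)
Your proposal is correct and follows essentially the same route as the paper: verify that conjugation preserves idempotency, commutation with $A$, and commutativity, then pull back any commuting idempotent $Q$ to $X^{-1}QX$ and invoke maximality of $\mathscr{P}$ in $\{A\}'$. The only difference is that you spell out the abelianness check, which the paper leaves implicit.
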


\begin{proof}
For any $XPX^{-1}$ in $X\mathscr {P}X^{-1}$, $(XPX^{-1})^2=XPX^{-1}$
implies that $X\mathscr {P}X^{-1}$ is a set of idempotents and it is
easy to verify that $X\mathscr {P}X^{-1}$ belongs to
$\{XAX^{-1}\}'$, since
$XPX^{-1}XAX^{-1}=XPAX^{-1}=XAPX^{-1}=XAX^{-1}XPX^{-1}$. If $Q$ is
an idempotent in $\{XAX^{-1}\}'$ and commutes with every element in
$X\mathscr {P}X^{-1}$, then by definition we have that $X^{-1}QX$ is
in $\{A\}'$ and commutes with each element in $\mathscr {P}$. This
shows that $X^{-1}QX$ is in $\mathscr {P}$. equivalently, $Q\in
X\mathscr {P}X^{-1}$. Therefore $X\mathscr {P}X^{-1}$ is a maximal
abelian set of idempotents in $\{XAX^{-1}\}'$.

\end{proof}

Notice that Lemma 3.2 also applies to the case of bounded abelian
Boolean algebra of idempotents. By applying the above three lemmas,
there exists an invertible operator $X$ such that each element in
$X\mathscr {P}X^{-1}$ is a projection, meanwhile $X\mathscr
{P}X^{-1}$ is still a maximal abelian set of idempotents in
$\{XAX^{-1}\}'$.

Therefore without loss of generality, in the rest of this section,
we assume that each element in $\mathscr {P}$ is a projection while
$\mathscr {P}$ is still a bounded maximal abelian set of idempotents
in $\{A\}'$. Let $W^*(\mathscr {P})$ denote the von Neumann algebra
generated by $\mathscr {P}$. Then the following lemma simplifies the
statements in the second part of this section.

\begin{lem}
The von Neumann algebra $W^*(\mathscr {P})$ is a maximal abelian von
Neumann algebra (m.a.s.a.) in $\{A\}'$.
\end{lem}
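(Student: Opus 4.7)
The plan is to leverage the reduction already carried out before the lemma: $\mathscr{P}$ consists of self-adjoint projections and remains a bounded maximal abelian set of idempotents in $\{A\}'$. I would first verify that $W^{*}(\mathscr{P})$ has the claimed structural properties, and then establish maximality by reducing the question to the level of projections and invoking the hypothesis on $\mathscr{P}$.

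First I would record that $W^{*}(\mathscr{P})$ is an abelian von Neumann subalgebra of $\{A\}'$. Abelianness is immediate: $\mathscr{P}$ is a commuting family, so the unital $*$-algebra it generates is abelian, and abelianness is preserved by the WOT closure. For the containment, each $P\in\mathscr{P}$ is self-adjoint and lies in $\{A\}'$, so $P^{*}=P\in\{A\}'$; the $*$-algebra generated by $\mathscr{P}$ therefore lies in $\{A\}'$, and since $\{A\}'$ is WOT-closed, its closure $W^{*}(\mathscr{P})$ lies in $\{A\}'$ as well.

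The substantive step is maximality. Suppose $\mathscr{A}$ is any abelian von Neumann subalgebra of $\{A\}'$ with $W^{*}(\mathscr{P})\subseteq\mathscr{A}$; I want to deduce $\mathscr{A}=W^{*}(\mathscr{P})$. Take an arbitrary projection $Q\in\mathscr{A}$. Because $\mathscr{A}$ is abelian, $Q$ commutes with every element of $\mathscr{P}\subset\mathscr{A}$, and $Q\in\mathscr{A}\subseteq\{A\}'$ is in particular an idempotent in $\{A\}'$. The maximality hypothesis on $\mathscr{P}$ as an abelian set of idempotents in $\{A\}'$ then forces $Q\in\mathscr{P}$. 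Hence every projection of $\mathscr{A}$ already lies in $\mathscr{P}\subset W^{*}(\mathscr{P})$. Writing an arbitrary $T\in\mathscr{A}$ as $T=\tfrac{1}{2}(T+T^{*})+i\cdot\tfrac{1}{2i}(T-T^{*})$ and applying the spectral theorem to each self-adjoint summand inside the von Neumann algebra $\mathscr{A}$ shows that $\mathscr{A}$ is the norm-closed linear span of its projections, so $\mathscr{A}\subseteq W^{*}(\mathscr{P})$, and equality follows.

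I do not anticipate a major obstacle; the only conceptually delicate point is translating \emph{maximality as an abelian set of idempotents} (the hypothesis on $\mathscr{P}$) into \emph{maximality as an abelian von Neumann algebra} (the conclusion for $W^{*}(\mathscr{P})$). This gap is bridged exactly by the preparatory Lemmas 3.1--3.3 (which ensure we may take $\mathscr{P}$ to consist of projections) together with the structural fact that an abelian von Neumann algebra is generated by its projections; it is worth noting that the argument does not need $\{A\}'$ to be selfadjoint, since the projections of $\mathscr{A}$ live in $\mathscr{A}$ and the $*$-operation inside $\mathscr{A}$ is used only in this internal spectral decomposition.
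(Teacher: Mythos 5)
Your proof is correct. The paper in fact states Lemma 3.4 with no proof at all, so there is no argument to compare against; the one you give --- take a projection $Q$ in a larger abelian von Neumann subalgebra $\mathscr{A}$ of $\{A\}'$, observe it is an idempotent in $\{A\}'$ commuting with all of $\mathscr{P}$, conclude $Q\in\mathscr{P}$ by the maximality hypothesis, and then recover all of $\mathscr{A}$ as the norm-closed span of its projections --- is the standard argument the authors evidently intend, and it uses exactly the reduction of Lemmas 3.1--3.3. Your closing observation, that the adjoints needed for the spectral decomposition live inside the von Neumann algebra $\mathscr{A}$ and so no self-adjointness of $\{A\}'$ is required, is the one genuinely delicate point and you handle it correctly.
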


The operator $A$ can be expressed in the form of a direct integral
by Theorem AFG. But a ``direct integral'' is a ``relative'' concept,
namely, each (direct integral) decomposition of $A$ depends on an
abelian von Neumann algebra in $\{A\}'$. In this part, we specify
how a (direct integral) decomposition of $A$ depends on the choice
of an abelian von Neumann algebra in $\{A\}'$ and how an abelian von
Neumann algebra in $\{A\}'$ determines a partitioned measure space.
To do this, we recall two results from the classical theory of von
Neumann algebras.

Here are some necessary notations. Let $T$ be an operator in
$\mathscr {L}(\mathscr {H})$. The direct sum of $T$ with itself $n$
times is denoted by $T^{(n)}$, which acts on $\mathscr
{H}^{(n)}\equiv\mathscr {H}\oplus\mathscr
{H}\oplus\cdots\oplus\mathscr {H}$ ($n$ copies). Let $\mathscr {S}$
be a subset of $\mathscr {L}(\mathscr {H})$. Then we write $\mathscr
{S}^{(n)}$ for $\{T^{(n)}\in\mathscr {L}(\mathscr
{H}^{(n)}):T\in\mathscr {S}\}$, $\{\mathscr {S}\}'$ for the
commutant of $\mathscr {S}$ and $[\mathscr {S}]$ for the strongly
closed algebra generated by $\mathscr {S}$.

\begin{lem} [\cite{Rosenthal}]
If $\mathscr {U}$ is an abelian von Neumann algebra, then there
exists a self-adjoint operator $N$ such that $\{N\}''=\mathscr {U}$.
\end{lem}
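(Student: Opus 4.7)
The plan is to produce $N$ as a single self-adjoint generator of $\mathscr{U}$, exploiting the separability of $\mathscr{H}$. Since the unit ball of $\mathscr{L}(\mathscr{H})$ is metrizable in the strong operator topology, the projection lattice of $\mathscr{U}$ admits a countable SOT-dense sequence $\{P_n\}_{n=1}^{\infty}$. Because $\mathscr{U}$ is an abelian von Neumann algebra, it is strongly generated by its projections, so $W^{*}(\{P_n\}_n)=\mathscr{U}$.

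Next I would set
$$N \equiv \sum_{n=1}^{\infty} 3^{-n}P_n,$$
which converges in operator norm since $\|P_n\|\leq 1$ and $\sum 3^{-n}<\infty$. Each $P_n$ is self-adjoint and lies in $\mathscr{U}$, so $N=N^{*}\in\mathscr{U}$. The easy inclusion $\{N\}''\subseteq\mathscr{U}$ follows because $N\in\mathscr{U}$ gives $\mathscr{U}'\subseteq\{N\}'$, hence $\{N\}''\subseteq\mathscr{U}''=\mathscr{U}$.

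For the reverse inclusion I need to recover each $P_n$ from $N$ via Borel functional calculus. Since $\{P_n\}$ pairwise commutes and each $P_n$ takes only the values $0$ and $1$, the spectral theorem lets me simultaneously model $\{P_n\}\cup\{N\}$ as multiplication operators on some $L^{\infty}(X,\mu)$; pointwise, $N$ then equals $\sum 3^{-n}\epsilon_n$ with $\epsilon_n\in\{0,1\}$, so the $n$-th ternary digit of $N(x)$ is exactly $\epsilon_n(x)=P_n(x)$. The map $\{0,1\}^{\mathds{N}}\to[0,1]$ sending $(\epsilon_n)\mapsto\sum 3^{-n}\epsilon_n$ is injective with compact image, so there is a well-defined Borel function $f_n:\mathds{R}\to\{0,1\}$ that reads off the $n$-th ternary digit on that image and vanishes elsewhere. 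Borel functional calculus then yields $f_n(N)=P_n$, placing each $P_n$ in $\{N\}''$, and therefore $\mathscr{U}=W^{*}(\{P_n\}_n)\subseteq\{N\}''$.

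The main obstacle is making the identity $f_n(N)=P_n$ rigorous: one must realize $\{P_n\}\cup\{N\}$ simultaneously as multiplication operators so that the ternary-digit identity can be verified pointwise almost everywhere, and then invoke the uniqueness of the Borel functional calculus. A secondary but routine point is the existence of the countable generating family $\{P_n\}$, which uses the SOT-metrizability of bounded sets in $\mathscr{L}(\mathscr{H})$ together with the fact that projections in an abelian von Neumann algebra generate it strongly.
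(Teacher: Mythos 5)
The paper offers no proof of this lemma; it is quoted from the Radjavi--Rosenthal reference. Your argument is correct and is essentially the standard proof of that classical fact: a countable SOT-dense family of projections generates $\mathscr{U}$ (this is where separability of $\mathscr{H}$ enters, and the lemma is false without it), and the generator $N=\sum_n 3^{-n}P_n$ works because base $3$ makes $(\epsilon_n)\mapsto\sum_n 3^{-n}\epsilon_n$ injective on $\{0,1\}^{\mathds{N}}$ with compact image, so each digit function is Borel and $f_n(N)=P_n\in\{N\}''$ --- both of the delicate points (the choice of base, which fails for base $2$, and the a.e.\ pointwise verification in an $L^{\infty}$ model) are correctly identified and handled.
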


\begin{lem} [\cite{Conway}]
If $N$ is a normal operator, then there are mutually singular
measures $\mu_{\infty},\mu_1,\mu_2,\ldots$ (some of which may be
zero) such that $$N\cong N^{(\infty)}_{\mu_{\infty}}\oplus
N_{\mu_1}\oplus N^{(2)}_{\mu_{2}}\oplus\cdots.$$ If $M$ is another
normal operator with corresponding measures
$\nu_{\infty},\nu_{1},\nu_{2},\ldots,$ then $N\cong M$ if and only
if $[\mu_{n}]=[\nu_{n}]$ for $1\leq n\leq\infty$.
\end{lem}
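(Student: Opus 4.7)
The plan is to invoke the spectral theorem together with a multiplicity analysis. First, since $N$ is normal and $\mathscr{H}$ is separable, the spectral theorem yields a spectral measure $E$ on $\sigma(N)$, and one can extract a scalar-valued maximal measure $\mu$ for $N$: form $\mu=\sum_{n}2^{-n}\langle E(\cdot)e_n,e_n\rangle$ for a dense sequence $\{e_n\}$ of unit vectors in $\mathscr{H}$, and verify that $E(S)=0$ iff $\mu(S)=0$. This $\mu$ represents the unique scalar spectral measure class of $N$.

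Next, I would decompose $\mathscr{H}$ into $W^*(N)$-cyclic subspaces. A standard Zorn argument produces $\mathscr{H}=\bigoplus_k\mathscr{H}_k$ with each $\mathscr{H}_k$ invariant and cyclic under $W^*(N|_{\mathscr{H}_k})$, so $N|_{\mathscr{H}_k}\cong N_{\nu_k}$ for some Borel measure $\nu_k$ on $\sigma(N)$. Using the algebraic identity $N_{\alpha}\oplus N_{\beta}\cong N_{\alpha+\beta}\oplus N_{\alpha\wedge\beta}$ (where $\alpha\wedge\beta$ denotes the common absolutely continuous part) iteratively, I can rearrange the summands to get a descending chain $\nu_1\gg\nu_2\gg\cdots$. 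To produce the mutually singular form demanded by the statement, let $X_n\subset\sigma(N)$ be the Borel set where the multiplicity is exactly $n$, i.e., where precisely $n$ of the $\nu_k$ have nonzero Radon-Nikodym derivative with respect to $\nu_1$. Setting $\mu_n\equiv\nu_1|_{X_n}$, the $X_n$'s partition $\sigma(N)$ up to $\mu$-null sets, so the $\mu_n$'s are pairwise mutually singular. Restricting the cyclic-chain decomposition to the spectral subspace $E(X_n)\mathscr{H}$ then recovers $N^{(n)}_{\mu_n}$, yielding the asserted direct sum.

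For uniqueness, observe that the class $[\mu]$ and the multiplicity function $\lambda\mapsto n$ on the strata are both unitary invariants of $N$, the former because any unitary intertwining $N$ with another normal $M$ transports spectral measures, the latter because the multiplicity corresponds to the fiber dimension in the direct integral representation afforded by $W^*(N)$. Thus $N\cong M$ forces $[\mu_n]=[\nu_n]$ for every $n\in\{1,2,\ldots,\infty\}$; the converse is straightforward, since on each stratum $X_n$ one can assemble a unitary $L^2(\mu_n)^{(n)}\to L^2(\nu_n)^{(n)}$ built from a Radon-Nikodym derivative. The main obstacle will be the measure-theoretic bookkeeping required to pass from the cyclic decomposition (which is merely ordered by absolute continuity) to the mutually singular decomposition indexed by multiplicity, and in particular the verification that the multiplicity function is well-defined as a Borel function on $\sigma(N)$ independent of the cyclic decomposition chosen.
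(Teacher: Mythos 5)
The paper offers no proof of this lemma: it is quoted from Conway (Chapter IX, \S 10) as standard multiplicity theory for normal operators, so there is nothing internal to compare your argument against. Your sketch follows essentially the same (standard) route as the cited source---scalar spectral measure, cyclic decomposition, reduction to a descending chain $\nu_1\gg\nu_2\gg\cdots$, stratification of the spectrum by multiplicity, and unitary invariance of $[\mu]$ and of the multiplicity function---and is correct in outline, with the caveat that the ``iterative rearrangement'' of infinitely many cyclic summands into a descending chain and the well-definedness of the multiplicity function are precisely the points you flag where the real work lies and would have to be carried out in full.
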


For more details, see (\cite{Conway}, Chapter IX, \S 10).

By Lemma 3.5 and Lemma 3.6, there exists a self-adjoint operator $D$
in $W^*(\mathscr {P})$ such that
\begin{enumerate}
\item If $\mu$ is a scalar-valued spectral measure for
$D$, then there are pairwise disjoint Borel sets
$\Delta_{\infty,D},\Delta_{1,D},\Delta_{2,D},\ldots$ such that
$\mu_{n}$ and $\mu|_{\Delta_{n,D}}$ are mutually absolutely
continuous, and
$$\sigma(D)=\Delta_{\infty,D}\cup(\bigcup\limits^{\infty}_{n=1}\Delta_{n,D}).$$
\item The equation $$W^*(\mathscr {P})=\{D\}''\cong
(L^{\infty}(\Delta_{\infty,D},\mu_{\infty}))^{(\infty)}\oplus
(\bigoplus^{\infty}_{n=1}(L^{\infty}(\Delta_{n,D},\mu_{n}))^{(n)})
\eqno{(\dag)}$$ holds, where
$(L^{\infty}(\Delta_{n,D},\mu_{n}))^{(n)}$ acts on
$L^{2}(\mu_{n};\mathscr {H}_{n})$, and
$$L^{2}(\mu_{n};\mathscr {H}_{n})=\int^{\oplus}_{\Delta_{n,D}}
\mathscr {H}(\lambda) \mu(d\lambda),$$  the Hilbert space $\mathscr
{H}(\lambda)=\mathscr {H}_{n}$ is of dimension $n$ for
$\lambda\in\Delta_{n,D}$.
\end{enumerate}

Let $\mathscr {A}(A)$ denote the strongly closed algebra generated
by the operator $A$ and the identity $I$. Choose the triple set
$(\sigma(D),\mu,\{\Delta_{n,D}\}^{n=\infty}_{n=1})$ as the
partitioned measure space. Let $W^*(\mathscr {P})$ be the
corresponding diagonal algebra. Then each operator in $\mathscr
{A}(A)$ is decomposable with respect to
$(\sigma(D),\mu,\{\Delta_{n,D}\}^{n=\infty}_{n=1})$.

Now we introduce the (direct integral) decomposition for $\mathscr
{A}(A)$ with respect to
$(\sigma(D),\mu,\{\Delta_{n,D}\}^{n=\infty}_{n=1})$. As in
\cite{Azoff}, we choose a countable generating set
$\{A_{i}\}^{\infty}_{i=1}$ for $\mathscr {A}(A)$ and fix Borel
representatives
$$\lambda\rightarrow A_{i}(\lambda)$$ for their corresponding
decompositions.

For each $\lambda\in\sigma(D)$, denote by $\mathscr {A}(A)(\lambda)$
the strongly closed algebra generated by the
$\{A_{i}(\lambda)\}^{\infty}_{i=1}$. We write $$\mathscr {A}(A)\sim
\int^{\oplus}_{\sigma(D)}\mathscr {A}(A)(\lambda)\mu(d\lambda)$$

\vspace{-1 cm}\hfill(\ddag)\\
\\
and call this the decomposition of $\mathscr {A}(A)$ with respect to
$(\sigma(D),\mu,\{\Delta_{n,D}\}^{n=\infty}_{n=1})$.

In \cite{Azoff}, the authors proved that the algebras $\{\mathscr
{A}(A)(\lambda)\}_{\lambda\in\sigma(D)}$ in the above statements are
independent of the generating set $\{A_{i}\}^{\infty}_{i=1}$, up to
a set of measure zero in $\sigma(D)$. The algebra $\mathscr {A}(A)$
completely determines the algebras $\mathscr {A}(A)(\lambda)$.
However, the converse is not true in general.

The following two basic results are well known: \begin{enumerate}
\item An operator acting on a direct integral of Hilbert spaces is
decomposable if and only if it commutes with the corresponding
diagonal algebra (\cite{Schwartz}).
\item Every abelian von Neumann algebra is (unitarily equivalent to)
an essentially unique diagonal algebra (\cite{Schwartz}).
\end{enumerate}

\begin{defn}
A strongly closed subalgebra $\mathscr {A}$ of $\mathscr
{L}(\mathscr {H})$ is said to be strongly irreducible (resp.\
irreducible) if any idempotent (resp.\ projection) $P\in \mathscr
{L}(\mathscr {H})$ satisfying $PT=TP$ for all $T\in\mathscr {A}$ is
trivial.
\end{defn}

\begin{lem}
An operator $T\in\mathscr {L}(\mathscr {H})$ is strongly irreducible
if and only if $\mathscr {A}(T)$ is strongly irreducible.
\end{lem}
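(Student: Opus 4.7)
The plan is to reduce both implications to the statement that $T$ is strongly irreducible iff $\{T\}'$ contains no nontrivial idempotent, using the basic observation $\{\mathscr{A}(T)\}' = \{T\}'$.

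First I would verify this identity: the inclusion $\{\mathscr{A}(T)\}' \subseteq \{T\}'$ is trivial since $T \in \mathscr{A}(T)$, and conversely any $S$ commuting with $T$ commutes with every polynomial in $T$, and hence with every strong-operator limit of such polynomials, because left and right multiplication by the bounded operator $S$ are continuous in the strong operator topology. In view of Definition 3.7, this immediately says that $\mathscr{A}(T)$ is strongly irreducible if and only if $\{T\}'$ contains no nontrivial idempotent. The whole lemma thus reduces to showing that $T$ is strongly irreducible if and only if $\{T\}'$ contains no nontrivial idempotent.

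For the ``$\Leftarrow$'' half, assume $\{T\}'$ has no nontrivial idempotent. For any invertible $X \in \mathscr{L}(\mathscr{H})$, the map $Y \mapsto X^{-1}YX$ is a bijection of $\{XTX^{-1}\}'$ onto $\{T\}'$ carrying idempotents to idempotents. Any projection $P$ commuting with $XTX^{-1}$ therefore yields an idempotent $X^{-1}PX \in \{T\}'$, which by hypothesis equals $0$ or $I$, forcing $P \in \{0,I\}$. Hence $XTX^{-1}$ is irreducible for every invertible $X$, which is exactly strong irreducibility of $T$.

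For the ``$\Rightarrow$'' half, assume $T$ is strongly irreducible and let $Q$ be any idempotent in $\{T\}'$. The key step is to pass from $Q$ to an honest projection: I would apply Lemma 3.2 (in the version for bounded abelian Boolean algebras of idempotents, noted just after its statement) to the finite Boolean algebra $\{0, Q, I-Q, I\}$ generated by $Q$, producing an invertible $X$ such that $P := XQX^{-1}$ is an orthogonal projection. Since $Q \in \{T\}'$, $P$ commutes with $XTX^{-1}$, and strong irreducibility of $T$ forces $XTX^{-1}$ to be irreducible, so $P \in \{0, I\}$ and hence $Q \in \{0, I\}$. This shows $\{T\}'$ has no nontrivial idempotent, completing the proof. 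I do not anticipate a substantive obstacle: the only nontrivial ingredient is the similarity of a bounded idempotent to a projection, which is already subsumed by Lemma 3.2 in the generality stated; everything else is routine bookkeeping with commutants and conjugations.
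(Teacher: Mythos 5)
Your proof is correct and follows essentially the same route as the paper's: both arguments rest on the identity $\{\mathscr{A}(T)\}'=\{T\}'$ (via SOT-density of the polynomials in $T$ and SOT-continuity of multiplication) together with the characterization that $T$ is strongly irreducible precisely when $\{T\}'$ contains no nontrivial idempotent. The only difference is that you prove that characterization explicitly (conjugating an idempotent to a projection via Lemma 3.2 applied to $\{0,Q,I-Q,I\}$), whereas the paper treats it as the known working definition of strong reducibility.
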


\begin{proof}
If $T$ is strongly irreducible and $P\in \mathscr {L}(\mathscr {H})$
is an idempotent satisfying $PS=SP$ for all $S\in\mathscr {A}(T)$,
then $PT=TP$ implies $P=0$ or $P=I$.

If $T$ is strongly reducible, namely, there exists a nontrivial
idempotent $P$ such that $PT=TP$, then for every polynomial in $T$
denoted by ${\rm p}(T)$, we have $$P\ {\rm p}(T)={\rm p}(T)P.$$ Thus
$P\mathscr {A}(T)=\mathscr {A}(T)P$, since the set $\{{\rm p}(T):
{\rm p}=a~polynomial\}$ forms a dense subset of $\mathscr {A}(T)$ in
the strong operator topology. This is a contradiction.
\end{proof}

\begin{lem}[\cite{Schwartz}]
If $B\in\mathscr {L}(\mathscr {H})$ is decomposable and
$B\equiv\int^{\oplus}_{\Lambda}B(\lambda)\mu(d\lambda)$, then
$$\|B\|={\mu-\underset{\lambda\in\Lambda}{\rm ess.}\sup}\|B(\lambda)\|.$$
\end{lem}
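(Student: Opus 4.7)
The plan is to prove the identity by establishing the two inequalities separately. Write $M := \mu\text{-}\operatorname{ess.sup}_{\lambda \in \Lambda} \|B(\lambda)\|$ for brevity. The upper bound $\|B\| \le M$ is the routine direction: for any $f = \int^{\oplus}_{\Lambda} f(\lambda)\mu(d\lambda) \in \mathscr{H}$, the decomposability of $B$ gives $(Bf)(\lambda) = B(\lambda)f(\lambda)$ almost everywhere, so
\[
\|Bf\|^{2} = \int_{\Lambda} \|B(\lambda) f(\lambda)\|^{2} \mu(d\lambda) \le \int_{\Lambda} \|B(\lambda)\|^{2} \|f(\lambda)\|^{2} \mu(d\lambda) \le M^{2}\|f\|^{2},
\]
using $\|B(\lambda)\| \le M$ off a null set. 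Taking supremum over unit vectors yields $\|B\| \le M$.

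For the lower bound $\|B\| \ge M$, I would argue by contradiction: fix $c$ with $\|B\| < c < M$. By the very definition of the essential supremum, the set $E := \{\lambda \in \Lambda : \|B(\lambda)\| > c\}$ has positive $\mu$-measure. Using $\sigma$-finiteness of $\mu$ together with the Borel partition $\{\Lambda_{n}\}_{n=1}^{\infty}$ of $\Lambda$, I can replace $E$ by a subset $E' \subset E \cap \Lambda_{n}$ of positive finite measure for some $n$ (possibly $n = \infty$). On $E'$ the fiber Hilbert space is the fixed separable space $\mathscr{H}_{n}$, and $\lambda \mapsto B(\lambda)$ is a strongly measurable $\mathscr{L}(\mathscr{H}_{n})$-valued function satisfying $\|B(\lambda)\| > c$ on $E'$. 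Select a measurable map $g : E' \to \mathscr{H}_{n}$ with $\|g(\lambda)\| = 1$ and $\|B(\lambda)g(\lambda)\| > c$, and set $f(\lambda) = \mu(E')^{-1/2} g(\lambda)$ on $E'$, zero elsewhere. Then $\|f\| = 1$ while
\[
\|Bf\|^{2} = \frac{1}{\mu(E')}\int_{E'} \|B(\lambda) g(\lambda)\|^{2} \mu(d\lambda) > c^{2},
\]
contradicting $\|B\| < c$. Letting $c \nearrow M$ gives $\|B\| \ge M$.

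The main obstacle is the measurable selection of the near-maximizing unit vectors $g(\lambda)$. Once the reduction to $E' \subset \Lambda_{n}$ with fixed fiber $\mathscr{H}_{n}$ is in place, this is a standard application of the Kuratowski--Ryll-Nardzewski selection theorem: the multifunction $\lambda \mapsto \{v \in \mathscr{H}_{n} : \|v\| = 1,\ \|B(\lambda) v\| > c\}$ has nonempty values in a Polish space, and its graph is Borel because $(\lambda, v) \mapsto \|B(\lambda)v\|$ is measurable in $\lambda$ and continuous in $v$. All remaining steps (choice of $E'$, integral estimate, passage to the limit $c \to M$) are routine, so the selection step is the only place where genuine care is required.
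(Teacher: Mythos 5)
The paper offers no proof of this lemma; it is quoted from \cite{Schwartz} as a known fact, so there is nothing internal to compare against. Your argument is correct and is essentially the standard proof. The upper bound is routine, and your lower bound via a measurable selection of near-maximizing unit vectors on a finite-measure subset of some $\Lambda_n$ is sound: the set $\{\lambda:\|B(\lambda)\|>c\}$ is measurable because $\|B(\lambda)\|$ is a countable supremum of the measurable functions $\lambda\mapsto\|B(\lambda)e_j\|$ over a dense sequence in the unit ball, and the selection step can be carried out either by Kuratowski--Ryll-Nardzewski as you propose or, more in the spirit of this paper, by the von Neumann ``Principle of Measurable Choice'' stated immediately after this lemma (applied to the Borel relation $\{(\lambda,v):\|v\|=1,\ \|B(\lambda)v\|>c\}$ inside $\Lambda\times\mathscr{H}_n$). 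One small remark: the contradiction framing is unnecessary --- for each $c<M$ your test vector directly shows $\|B\|\ge c$, and letting $c\nearrow M$ finishes; this also covers the degenerate case $M=\infty$, which cannot occur since $B$ is bounded.
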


The following ``Principle of Measurable Choice'' will be used
repeatedly in our discussion. Von Neumann's original proof can be
found in \S 16 of \cite{von Neumann}. In this article, we follow
\cite{Azoff}.

\begin{lem}  Suppose $\Lambda$ is a partitioned measure space, $\mathscr{Y}$
is a complete metric space, and $\mathscr{E}$ is a Borel relation
contained in $\Lambda\times \mathscr{Y}$. Then the domain of
$\mathscr{E}$ is a measurable subset of $\Lambda$. Furthermore,
there exists a Borel function whose domain almost coincides with the
domain of $\mathscr{E}$ and whose graph is contained in
$\mathscr{E}$.
\end{lem}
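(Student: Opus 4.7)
The plan is to handle the statement in two stages: first establish measurability of the domain $D=\pi_{\Lambda}(\mathscr{E})$, and then construct a Borel selector by an iterative refinement scheme going back to von Neumann.

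For the measurability of $D$, I would observe that $D$ is the continuous image, under coordinate projection, of the Borel set $\mathscr{E}$ inside a product of separable metric spaces. After embedding $\Lambda$ and $\mathscr{Y}$ in their completions (Polish spaces), $D$ becomes an \emph{analytic} (Suslin) subset of $\Lambda$. Since $\mu$ is the completion of a regular $\sigma$-finite Borel measure, every analytic subset of $\Lambda$ is $\mu$-measurable, which gives the first assertion.

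To produce the selector, fix a countable dense set $\{y_{k}\}_{k\geq 1}$ in $\mathscr{Y}$ and, for each $n\geq 1$, the countable family of open balls $B_{n,k}=B(y_{k},2^{-n})$. For every finite tuple $s=(k_{1},\ldots,k_{n})$, the set
\[
D_{s}=\bigl\{\lambda\in D:\mathscr{E}_{\lambda}\cap B_{1,k_{1}}\cap\cdots\cap B_{n,k_{n}}\neq\emptyset\bigr\}
\]
is again the projection of a Borel set, hence analytic and $\mu$-measurable. Using the minimal-index convention (at stage $n$, take the least $k$ for which the extended tuple still belongs to some $D_{s}$), I would inductively define $\mu$-measurable functions $k_{n}:D\to\mathbb{N}$ producing nested balls, and set $f_{n}(\lambda)=y_{k_{n}(\lambda)}$. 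The sequence $\{f_{n}(\lambda)\}$ is Cauchy, so completeness of $\mathscr{Y}$ yields a pointwise limit $f(\lambda)$, and $f$ is $\mu$-measurable. After a standard modification on a $\mu$-null subset, I obtain the required Borel function whose domain almost coincides with $D$.

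The step I expect to be the main obstacle is ensuring $(\lambda,f(\lambda))\in\mathscr{E}$ rather than merely in its closure in $\Lambda\times\mathscr{Y}$. I would handle this by exploiting that every Borel set is Suslin: write $\mathscr{E}=\bigcup_{\phi\in\mathbb{N}^{\mathbb{N}}}\bigcap_{n}F_{\phi|n}$ with $\{F_{s}\}$ a regular Suslin scheme of closed sets having diameters tending to $0$. The recursion is then enriched so that at stage $n$ one also measurably selects the first $n$ coordinates of a branch $\phi_{\lambda}$---by the same minimal-index argument applied to the corresponding analytic projections---and requires the chosen ball to meet $F_{\phi_{\lambda}|n}\cap\mathscr{E}_{\lambda}$. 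Nesting of the scheme together with completeness of $\mathscr{Y}$ then forces $f(\lambda)\in\bigcap_{n}F_{\phi_{\lambda}|n}\subseteq\mathscr{E}_{\lambda}$. The delicate point is to track the Suslin branch and the metric refinement simultaneously while keeping measurability of each choice through the projection-of-Borel-is-analytic principle.
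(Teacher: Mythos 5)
The paper does not actually prove this lemma: it is quoted as von Neumann's ``Principle of Measurable Choice,'' with the proof deferred to \S 16 of the reduction-theory paper of von Neumann and to Azoff--Fong--Gilfeather, so there is no in-paper argument to compare against. Your proposal is the standard modern (Jankov--von Neumann) route through descriptive set theory, and its outline is sound: the domain is the projection of a Borel set, hence analytic after passing to Polish completions, hence measurable for the completed $\sigma$-finite measure; and the selector is built by a leftmost-branch recursion. You correctly isolate the one genuinely delicate point --- that a naive nested-ball refinement only places the limit in the closure of the fiber $\mathscr{E}_{\lambda}$ --- and your fix via a regular Suslin scheme $\mathscr{E}=\bigcup_{\phi}\bigcap_{n}F_{\phi|n}$ of closed sets with vanishing diameters is exactly the standard repair: the Suslin operation commutes with taking fibers, any single branch satisfies $\bigcap_{n}(F_{\phi|n})_{\lambda}\subseteq\mathscr{E}_{\lambda}$, and completeness plus nonemptiness of the finite intersections forces the limit into that set. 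Two small points to make explicit if you write this up: (i) you tacitly use separability of $\mathscr{Y}$ (your countable dense set $\{y_k\}$); the lemma as printed omits it, but the source the paper follows assumes $\mathscr{Y}$ complete \emph{and separable}, and the hypothesis is genuinely needed for the projection to be analytic; (ii) you should verify that the greedy minimal-index recursion never dead-ends, i.e.\ that whenever a partial choice of balls and of an initial segment of $\phi_{\lambda}$ meets $\mathscr{E}_{\lambda}$ it admits an extension at the next stage --- this holds because the balls of each radius cover $\mathscr{Y}$ and the scheme is regular, but it is the hinge on which the induction turns.
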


By what we have discussed in the second part of this section, we
assume that all the Hilbert spaces $\mathscr {H}(\lambda)$ for
$\lambda\in\sigma(D)$ are of the same dimension $n$ (denoted by
$\mathscr {H}_{n}$) in the rest of this section without loss of
generality. Denote by $\mathscr {Q}(\mathscr {H}_{n})$ the
collection of idempotents in $\mathscr {L}(\mathscr {H}_{n})$, by
$\mathscr {C}(\mathscr {H}_{n})$ the collection of contractions in
$\mathscr {L}(\mathscr {H}_{n})$. Let $\{A_{i}\}^{\infty}_{i=1}$ be
a countable generating set for $\mathscr {A}(A)$ with $\|A_{i}\|\leq
1$. Suppose that $\Lambda_1$ and $\Lambda_2$ are topological spaces.
The map $\pi_{i}:\Lambda_1\times\Lambda_2\rightarrow\Lambda_{i}$ is
defined by $\pi_{i}(\lambda_1,\lambda_2)=\lambda_{i}$ for $i=1,2$.

\begin{lem} The set
$$\mathscr{E}\equiv\{(\lambda,Q)\in\sigma(D)\times \mathscr {Q}(\mathscr {H}_{n}):
A_{i}(\lambda)Q=QA_{i}(\lambda),\ Q\neq 0, I,\ i=1,2,\ldots\}$$ is a
Borel relation, $\pi_{1}(\mathscr{E})$ is a measurable subset of
$\sigma(D)$.
\end{lem}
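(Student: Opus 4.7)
The plan is to show $\mathscr{E}$ is a Borel subset of $\sigma(D)\times\mathscr{Q}(\mathscr{H}_n)$ for a suitable Polish topology on the second factor, and then to invoke the Principle of Measurable Choice (Lemma 3.10) to conclude that the domain $\pi_1(\mathscr{E})$ is measurable.

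First I would fix a Polish topology on $\mathscr{Q}(\mathscr{H}_n)$. If $n<\infty$, $\mathscr{L}(\mathscr{H}_n)\cong M_n(\mathbb{C})$ with the operator norm is already Polish and $\mathscr{Q}(\mathscr{H}_n)$ is the norm-closed zero set of $Q\mapsto Q^2-Q$. For $n=\infty$ one cannot use the norm topology directly because idempotents are unbounded in norm; I would therefore stratify by the norm balls $\mathscr{Q}_k(\mathscr{H}_n)\equiv\{Q\in\mathscr{Q}(\mathscr{H}_n):\|Q\|\leq k\}$, $k\in\mathbb{N}$. On the closed ball of radius $k$ in $\mathscr{L}(\mathscr{H}_n)$, the strong operator topology is Polish (since $\mathscr{H}_n$ is separable), and the equation $Q^2=Q$ is preserved under SOT-limits on bounded sets, so each $\mathscr{Q}_k(\mathscr{H}_n)$ is SOT-closed and thus Polish. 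Consequently $\mathscr{Q}(\mathscr{H}_n)=\bigcup_k\mathscr{Q}_k(\mathscr{H}_n)$ is a countable union of Polish subspaces.

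Next I would show $\mathscr{E}$ is Borel level-by-level. The maps $\lambda\mapsto A_i(\lambda)$ are, by construction, Borel representatives of the decomposition of the generators $A_i$, hence strongly $\mu$-measurable. On the bounded set $\mathscr{Q}_k(\mathscr{H}_n)$, the bilinear maps $(T,Q)\mapsto TQ$ and $(T,Q)\mapsto QT$ are jointly SOT-continuous when $T$ ranges over any norm-bounded set. Therefore, for each fixed $i$ and $k$, the function
$$(\lambda,Q)\longmapsto A_i(\lambda)Q-QA_i(\lambda),\qquad \sigma(D)\times\mathscr{Q}_k(\mathscr{H}_n)\longrightarrow(\mathscr{L}(\mathscr{H}_n),\mathrm{SOT})$$
is Borel, and the preimage of $0$ is a Borel subset of $\sigma(D)\times\mathscr{Q}_k(\mathscr{H}_n)$. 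The conditions $Q\neq 0$ and $Q\neq I$ cut out SOT-open sets. Intersecting over $i=1,2,\dots$ (countably many) and unioning over $k$ yields $\mathscr{E}$ as a countable Boolean combination of Borel sets, hence Borel.

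With $\mathscr{E}$ identified as a Borel relation in $\sigma(D)\times\mathscr{Y}$ for the Polish space $\mathscr{Y}=\mathscr{Q}(\mathscr{H}_n)$ (or more precisely each stratum $\mathscr{Q}_k(\mathscr{H}_n)$), Lemma 3.10 immediately gives that $\pi_1(\mathscr{E})$ is a measurable subset of $\sigma(D)$; since countable unions of measurable sets are measurable, the full projection remains measurable. The only real obstacle is the topological setup for the space of idempotents: idempotents are not uniformly bounded in norm, so one cannot naively take $\mathscr{Q}(\mathscr{H}_n)$ with the operator-norm topology and must either stratify by norm or replace the norm with SOT on bounded pieces; once that is done the measurability of $(\lambda,Q)\mapsto A_i(\lambda)Q-QA_i(\lambda)$ is a routine consequence of the strong measurability of $A_i(\cdot)$ and joint SOT-continuity of multiplication on bounded sets.
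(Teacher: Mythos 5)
Your proposal is correct and follows essentially the same route as the paper: stratify $\mathscr{Q}(\mathscr{H}_n)$ by the norm balls $\mathscr{Q}_k(\mathscr{H}_n)$, use the strong continuity of composition on bounded subsets of $\mathscr{L}(\mathscr{H}_n)$ to see that each $\mathscr{E}_k$ is Borel, and conclude via Lemma 3.10 and the countable union $\pi_1(\mathscr{E})=\bigcup_k\pi_1(\mathscr{E}_k)$. The extra detail you supply on the Polish structure of the bounded strata and the openness of the conditions $Q\neq 0,I$ only fleshes out what the paper leaves implicit.
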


\begin{proof}
By the (direct integral) decomposition of $\mathscr {A}(A)$
mentioned in the second part of this section, the set
$\{A_{i}(\lambda)\}^{\infty}_{i=1}$ generate $\mathscr
{A}(A)(\lambda)$ for almost every $\lambda\in\sigma(D)$. By Lemma
3.9, $\|A_{i}(\lambda)\|\leq 1$ holds almost everywhere on
$\sigma(D)$. Define
$$\mathscr {Q}_k(\mathscr {H}_{n}) \equiv
\{Q\in\mathscr {L}(\mathscr {H}_{n}):Q^2=Q,\|Q\|\leq k\},\
k=1,2,3,\ldots$$ then we have $$\mathscr {Q}(\mathscr
{H}_{n})=\bigcup\limits^{\infty}_{k=1}\mathscr {Q}_k(\mathscr
{H}_{n}).$$ Define
$$\mathscr{E}_k\equiv\{(\lambda,Q)\in\sigma(D)\times \mathscr {Q}_k(\mathscr {H}_{n}):
A_{i}(\lambda)Q=QA_{i}(\lambda), Q\neq 0, I,\ i=1,2,\ldots\}.$$ Then
$$\mathscr{E}=\bigcup\limits^{\infty}_{k=1}\mathscr{E}_k,\quad \pi_{1}(\mathscr{E})=
\bigcup\limits^{\infty}_{k=1} \pi_{1}(\mathscr{E}_k).$$ Since
composition is strongly continuous on bounded subset of $\mathscr
{L}(\mathscr {H}_{n})$, $\mathscr{E}_k$ is a Borel relation.
Moreover $\pi_{1}(\mathscr{E})= \bigcup\limits^{\infty}_{k=1}
\pi_{1}(\mathscr{E}_k)$ is a measurable subset of $\sigma(D)$.
\end{proof}

\begin{proof}[{Proof of Theorem 1.2 (Main Theorem)}] We prove the ``if''
part. The ``only if'' part is clear.

By (\ddag) (after Lemma 3.6), we write $\mathscr {A}(A)$ in the form
of a direct integral, hence we have
$$A=\int^{\oplus}_{\sigma(D)}A(\lambda)\mu(d\lambda).$$ We want to
prove $A(\lambda)$ is strongly irreducible almost everywhere on
$\sigma(D)$. Equivalently, we need to prove that the collection of
$\lambda$, for which $A(\lambda)$ is strongly reducible, is a set of
measure zero. More precisely, we prove that for the set
$$\mathscr{E}=\{(\lambda,Q)\in\sigma(D)\times \mathscr {Q}(\mathscr {H}_{n}):
A_{i}(\lambda)Q=QA_{i}(\lambda), Q\neq 0, I,\ i=1,2,\ldots\},$$ the
set $\pi_{1}(\mathscr{E})$ is of measure zero. By
$\pi_{1}(\mathscr{E})= \bigcup^{\infty}_{k=1}
\pi_{1}(\mathscr{E}_k)$, we prove that every
$\pi_{1}(\mathscr{E}_k)$ is of measure zero.

Applying Lemma 3.10, for a fixed positive integer $k$, we get a
Borel function $P_k(\cdot)$ defined on almost all of the domain
$\pi_{1}(\mathscr{E}_k)$ such that $P_{k}(\lambda)\in\mathscr
{Q}_{k}(\mathscr {H}_{n})$ is nontrivial for almost every
$\lambda\in\pi_{1}(\mathscr{E}_k)$. Let $P_k(\lambda)$ be zero for
every $\lambda\in\sigma(D)\backslash\pi_{1}(\mathscr{E}_k)$ and take
$P_{k}=\int^{\oplus}_{\sigma(D)}P_{k}(\lambda)\mu(d\lambda)$.

For almost every $\lambda\in\sigma(D)$, we have $P_k(\lambda)\in
\{\mathscr {A}(A)(\lambda)\}'$. By $P^2_k(\lambda)=P_k(\lambda)$ for
almost every $\lambda\in\sigma(D)$, we know that $P_k$ is a
decomposable idempotent and $P_k\in\{\mathscr {A}(A)\}'$.

We show that $P_{k}$ is in $\mathscr {P}$. For almost every
$\lambda\in\sigma(D)$, $P_k(\lambda)$ commutes with the identity $I$
or 0 in $\mathscr {L}(\mathscr {H}_{n})$. This implies that
$P_k\in\{W^*(\mathscr {P})\}'$. Because every idempotent in
$\{A\}'\cap\mathscr {P}'$ belongs to $\mathscr {P}$, we obtain
$P_k\in \mathscr {P}$. This means that $P_k(\lambda)$ is trivial
almost everywhere on $\sigma(D)$. Therefore by the definition of
$P_k(\cdot)$, we have $P_k=0$ and $\pi_{1}(\mathscr{E}_k)$ has
measure zero. Thus $\pi_{1}(\mathscr{E})$ has measure zero.

So $\mathscr {A}(A)(\lambda)$ is strongly irreducible almost
everywhere on $\sigma(D)$. In the second part of this section, we
mentioned that the (direct integral) decomposition of $\mathscr
{A}(A)$ is independent of the generating set up to a set of measure
zero, and the collection of all polynomials in $A(\lambda)$ is a
countable generating set of $\mathscr {A}(A)(\lambda)$ for $\lambda$
almost everywhere in $\sigma(D)$. By Lemma 3.8, the operator
$A(\lambda)$ is strongly irreducible almost everywhere on
$\sigma(D)$.
\end{proof}


\section{Applications Of The Main Theorem}

In this section, we study some classes of operators by Theorem 1.2.
The first class is normal operators.

\begin{prop}
If $N\in\mathscr {L}(\mathscr {H})$ is a normal operator, then each
maximal abelian set of projections in $\{N\}'$ is a bounded maximal
abelian set of idempotents in $\{N\}'$.
\end{prop}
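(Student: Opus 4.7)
The plan is to reduce the statement to the standard correspondence between MASAs in a von Neumann algebra and maximal abelian sets of projections. Boundedness is immediate, since every projection has operator norm at most $1$, so the substantive claim is the upgrade from maximality among projections to maximality among idempotents.

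First I would invoke the fact that $N$ is normal, so by Fuglede's theorem $\{N\}'$ is self-adjoint and hence a von Neumann algebra. Given a maximal abelian set of projections $\mathscr{P}\subset\{N\}'$, the algebra $W^{*}(\mathscr{P})$ is then an abelian von Neumann subalgebra of $\{N\}'$. The next step is to show that $W^{*}(\mathscr{P})$ is in fact a MASA in $\{N\}'$, i.e.\ $W^{*}(\mathscr{P})'\cap\{N\}'=W^{*}(\mathscr{P})$. For this, if $T\in\{N\}'$ commutes with $W^{*}(\mathscr{P})$, decompose $T$ into its self-adjoint real and imaginary parts $T_{1},T_{2}$; each $T_{j}$ lies in $\{N\}'$ (by self-adjointness of the commutant) and commutes with every element of $\mathscr{P}$. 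The spectral projections of each $T_{j}$ then lie in $\{N\}'$ and commute with $\mathscr{P}$, so maximality of $\mathscr{P}$ forces them into $\mathscr{P}\subset W^{*}(\mathscr{P})$. Hence $T_{j}\in W^{*}(\mathscr{P})$ and $T\in W^{*}(\mathscr{P})$.

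Now suppose $Q\in\{N\}'$ is an idempotent commuting with every $P\in\mathscr{P}$. Then $Q$ commutes with $W^{*}(\mathscr{P})$, so by the MASA property just established, $Q\in W^{*}(\mathscr{P})$. By Lemma 3.5 together with the spectral theorem, $W^{*}(\mathscr{P})\cong L^{\infty}(X,\nu)$ for some localizable measure space, and any idempotent in $L^{\infty}(X,\nu)$ is (a.e.) the characteristic function of a measurable set, in particular real-valued. Thus $Q$ is self-adjoint and therefore a projection. Maximality of $\mathscr{P}$ as an abelian set of projections in $\{N\}'$ then gives $Q\in\mathscr{P}$, which is exactly the required maximality among idempotents.

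The only step needing any care is the identification of idempotents in an abelian von Neumann algebra with projections, which is essentially the observation that $f^{2}=f$ in $L^{\infty}$ forces $f$ to be $\{0,1\}$-valued almost everywhere. Everything else is routine manipulation with commutants, spectral projections, and Fuglede's theorem, and no genuine obstacle is expected.
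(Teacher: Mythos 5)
Your proof is correct, and it reaches the conclusion by a genuinely different final step than the paper, even though both arguments pivot on the same object, namely the von Neumann algebra generated by $\mathscr{P}$. The paper takes $\mathscr{M}=W^*(\mathscr{P})$, upgrades it to a maximal abelian von Neumann algebra in all of $\mathscr{L}(\mathscr{H})$ (not just in $\{N\}'$), and then, given an idempotent $P$ commuting with $\mathscr{M}$, invokes the Putnam--Fuglede-type result (\cite{Conway}, IX, 6.10) to produce an orthogonal projection $P'$ with $\mathrm{ran}\,P'=\mathrm{ran}\,P$ commuting with $\mathscr{M}$; maximality places $P'$ in $\mathscr{M}$, and the relations $PP'=P'$, $P'P=P$ force $P=P'$. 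You instead stay entirely inside $\{N\}'$: you prove that $W^*(\mathscr{P})$ is a masa in $\{N\}'$ by splitting a commuting $T$ into self-adjoint parts and absorbing their spectral projections into $\mathscr{P}$ via maximality, and you then observe that an idempotent lying in an abelian von Neumann algebra is automatically self-adjoint (your $L^{\infty}$ argument; even more directly, a normal idempotent is a projection). Your route avoids the range-projection trick altogether and makes explicit the masa property inside $\{N\}'$ that the paper's write-up leaves somewhat implicit, while the paper's route yields the slightly stronger intermediate fact that $\mathscr{M}$ is maximal abelian in $\mathscr{L}(\mathscr{H})$. Both appeals to Fuglede's theorem (yours to see that $\{N\}'$ is self-adjoint, the paper's through the Conway citation) are legitimate, and I see no gap in your argument.
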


\begin{proof}
Let $\mathscr {M}$ be a maximal abelian von Neumann algebra in
$\{N\}'$. By $N\in \{N\}'\cap\mathscr {M}'$, we have $N\in \mathscr
{M}$, otherwise $\mathscr {M}$ is not maximal. If $N_{1}$ is a
normal operator in $\mathscr {M}'$, then $N_{1}N=NN_{1}$ and
$N_{1}\in\{N\}'$. Hence $\mathscr {M}$ is a maximal abelian von
Neumann algebra in $\mathscr {L}(\mathscr {H})$. Let $\mathscr {Q}$
be the lattice of all the projections in $\mathscr {M}$. If
$P\in\{N\}'$ is an idempotent commuting with every element in
$\mathscr {Q}$, then we have $PM_1=M_1P$ for all $M_1\in\mathscr
{M}$. By (\cite{Conway}, IX, 6.10), there exists a projection $P'$
with ${\rm ran} P={\rm ran} P'$ and $P'M_1=M_1P'$ for all
$M_1\in\mathscr {M}$. Since $\mathscr {M}$ is maximal in $\mathscr
{L}(\mathscr {H})$, we have $P'\in \mathscr {M}$. Thus $PP'=P'P$.
This implies $P=P'$. So $\mathscr {Q}$ is a bounded maximal abelian
set of idempotents in $\{N\}'$.
\end{proof}

\begin{cor}
Every normal operator is a direct integral of strongly irreducible
operators.
\end{cor}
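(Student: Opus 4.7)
The plan is to combine Proposition 4.1 with the Main Theorem (Theorem 1.2), observing that the former produces precisely the hypothesis the latter requires. First, I would invoke Zorn's lemma, applied to the collection of abelian von Neumann subalgebras of $\{N\}'$ ordered by inclusion, to obtain a maximal abelian von Neumann algebra $\mathscr{M}\subset\{N\}'$; its projection lattice $\mathscr{Q}$ is then a maximal abelian set of projections in $\{N\}'$ (any projection in $\{N\}'$ commuting with all of $\mathscr{Q}$ lies in $\mathscr{M}$ by maximality, hence in $\mathscr{Q}$).

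Next, by Proposition 4.1, $\mathscr{Q}$ is in fact a bounded maximal abelian set of idempotents in $\{N\}'$; boundedness is automatic since each self-adjoint projection has norm $0$ or $1$. The Main Theorem then yields an invertible $X\in\mathscr{L}(\mathscr{H})$ such that $XNX^{-1}$ is a direct integral of strongly irreducible operators.

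Finally, to upgrade ``similar to a direct integral'' into the literal ``is a direct integral'' asserted by the corollary, I would inspect the proof of the Main Theorem: the sole purpose of $X$ there is to conjugate the bounded idempotents of $\mathscr{P}$ into projections via Lemma 3.2. Since the elements of $\mathscr{Q}$ are already projections, we may take $X=I$, and $N$ itself decomposes as a direct integral of strongly irreducible operators with respect to the partitioned measure space determined by the self-adjoint generator of $W^{\ast}(\mathscr{Q})$ guaranteed by Lemma 3.5. The main obstacle in this chain, namely the passage from maximal abelian projections to maximal abelian idempotents, has already been handled by Proposition 4.1, so the corollary is essentially a direct invocation of both results.
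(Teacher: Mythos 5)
Your proposal is correct and follows essentially the route the paper intends (the paper leaves this corollary's proof implicit): obtain a maximal abelian set of projections in $\{N\}'$, apply Proposition 4.1 to see it is a bounded maximal abelian set of idempotents, and invoke Theorem 1.2. Your closing observation that the similarity $X$ may be taken to be the identity, since the idempotents are already self-adjoint, is exactly the point needed to justify the corollary's stronger phrasing ``is a direct integral'' rather than ``is similar to one.''
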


In 1954, N. Dunford \cite{Dunford} introduced the concept of
\textit{spectral operator} on complex Banach space. A bounded linear
operator $T$ is spectral if and only if it has a canonical
decomposition of the form $$T=S+R,$$ where $S$ is a scalar type
operator and $R$ is a quasinilpotent operator commuting with $S$.
($\sigma(R)=\{0\}$.) A scalar type operator is a spectral operator
with resolution of the identity $I$ which satisfies the equation
$$S=\int_{\sigma(S)}\lambda E(d\lambda).$$

The scalar part $S$ of $T$ and the racial part $R$ of $T$ are
uniquely determined by $T$.

On a Hilbert space $\mathscr {H}$, if $T\in\mathscr {L}(\mathscr
{H})$ is spectral, then there exists an invertible operator
$X\in\mathscr {L}(\mathscr {H})$ such that $XTX^{-1}$ has the
canonical decomposition of the form
$$XTX^{-1}=S+R,$$ where $S$ is a normal operator and $R$ is a
quasinilpotent operator commuting with $S$.

As an application of Theorem 1.2 and by the method introduced in the
proof of the following proposition, many spectral operators in
$\mathscr {L}(\mathscr {H})$ are similar to direct integrals of
strongly irreducible operators.

We denote by $\mu$ a regular Borel measure with compact support.
Define $N_{\mu}$ on $L^2(\mu)$ by
$$(N_{\mu}f)(t)=t\cdot f(t),\ \forall f\in L^2(\mu),\
t\in\textrm{spt}(\mu).$$

On the Hilbert space $(L^2(\mu))^{(2)}$, Define an operator $R$ in
the form
$$R=\left(\begin{array}{cc}
0&M_{\phi}\\
0&0\\
\end{array}\right)
\begin{array}{c}
L^2(\mu)\\
L^2(\mu)\\
\end{array},\quad
\phi\in L^{\infty}(\mu),$$ where $M_{\phi}f=\phi\cdot f,\ \forall
f\in L^2(\mu)$, then we have the following result.

\begin{prop}
Let $T=N^{(2)}_{\mu}+R$. Then $T$ is similar to a direct integral of
strongly irreducible operators.
\end{prop}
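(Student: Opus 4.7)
The plan is to apply Theorem 1.2 (the Main Theorem) by exhibiting an explicit bounded maximal abelian set of projections in $\{T\}'$. Write $E_0 \equiv \{t \in \mathrm{spt}(\mu) : \phi(t) = 0\}$ and set
$$\mathscr{P} \equiv \bigl\{ M_{\chi_E} \oplus M_{\chi_F} : E,\, F \text{ Borel in } \mathrm{spt}(\mu),\ E \triangle F \subset E_0 \bigr\}.$$

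First I would verify $\mathscr{P} \subset \{T\}'$ by a direct $2 \times 2$ block computation: since $N_\mu$, $M_\phi$, and all $M_{\chi_E}$ mutually commute on $L^2(\mu)$, the commutation $TP = PT$ reduces to the single identity $\phi\chi_F = \chi_E \phi$ in $L^\infty(\mu)$, and that identity is exactly the requirement that $\phi$ vanish on $E \triangle F$. Each member of $\mathscr{P}$ is a projection of norm $1$, and $\mathscr{P}$ is abelian because its elements are simultaneously diagonal multiplication operators; boundedness and closure under the Boolean operations of Lemma 3.1 are immediate.

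The substantive step is maximality. Let $Q = (Q_{ij})_{i,j=1}^{2}$ be an idempotent in $\{T\}'$ that commutes with every element of $\mathscr{P}$. Specialising to $E = F$ ranging over all Borel sets shows that each $Q_{ij}$ commutes with every $M_{\chi_E}$; since $L^\infty(\mu)$ is a MASA in $\mathscr{L}(L^2(\mu))$, this forces $Q_{ij} \in L^\infty(\mu)$, so $Q$ is decomposable and represented by a measurable function $t \mapsto Q(t) \in M_2(\mathds{C})$. Writing $QT = TQ$ pointwise with $T(t) = \bigl(\begin{smallmatrix} t & \phi(t) \\ 0 & t \end{smallmatrix}\bigr)$: on $E_0$ we have $T(t) = tI$ and there is no restriction, while on $E_0^c$ the operator $T(t)$ is a non-trivial Jordan block, forcing $Q(t)$ to be upper triangular with equal diagonal entries; combined with $Q(t)^2 = Q(t)$, this forces $Q(t) \in \{0, I\}$ on $E_0^c$. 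Applying the commutation condition with the ``off-diagonal'' elements $0 \oplus M_{\chi_F}$ for Borel $F \subset E_0$ (which lie in $\mathscr{P}$, taking $E = \emptyset$) then kills both off-diagonal entries of $Q(t)$ on $E_0$, so $Q(t)$ is a diagonal $\{0,1\}$-valued idempotent there. Thus $Q = M_{\chi_{E'}} \oplus M_{\chi_{F'}}$ with $E' \triangle F' \subset E_0$, i.e., $Q \in \mathscr{P}$.

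Having produced a bounded maximal abelian set of idempotents in $\{T\}'$, Theorem 1.2 yields the conclusion. The main obstacle is the maximality verification, which requires combining two different pointwise mechanisms: the Jordan structure on $E_0^c$ (to rule out any non-scalar pointwise idempotent there) and the use of the genuinely non-diagonal members $0 \oplus M_{\chi_F}$ of $\mathscr{P}$ to eliminate off-diagonal freedom on the degenerate set $E_0$ where $T(t)$ is a scalar.
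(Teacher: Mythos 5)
Your proposal is correct and follows essentially the same route as the paper: exhibit a bounded maximal abelian set of idempotents in $\{T\}'$ and invoke Theorem 1.2, where your set $\mathscr{P}=\{M_{\chi_E}\oplus M_{\chi_F}: E\triangle F\subset E_0\}$ is exactly the paper's $\mathscr{P}^{(2)}_{1}\oplus\mathscr{P}_{2}\oplus\mathscr{P}_{2}$ after splitting $\mathrm{spt}(\mu)$ into $\{\phi\neq 0\}$ and $E_0$. The only (harmless) difference is that you verify maximality by a single fiberwise computation, whereas the paper decomposes $T=T_1\oplus T_2$ and reduces to the calculations of Lemmas 4.4 and 4.5.
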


First we prepare two lemmas.

\begin{lem}
If $M_{\phi}=I$, then $T$ is similar to a direct integral of
strongly irreducible operators.
\end{lem}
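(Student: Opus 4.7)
The plan is to invoke Theorem 1.2 (Main Theorem) by exhibiting a bounded maximal abelian set of idempotents in $\{T\}'$. A natural candidate is
\[
\mathscr{P} := \{M_{\chi_E} \oplus M_{\chi_E} : E \text{ a Borel subset of } \textrm{spt}(\mu)\},
\]
whose elements are projections of norm at most $1$, so $\mathscr{P}$ is bounded and abelian. Since each $M_{\chi_E}$ commutes with $N_{\mu}$ and with $M_{\phi}=I$, a direct $2\times 2$ block computation shows that $\mathscr{P} \subseteq \{T\}'$.

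The substantive step is maximality. First I would take an arbitrary idempotent $Q = (Q_{ij})_{i,j=1}^{2}$ in $\{T\}'$ commuting with every element of $\mathscr{P}$. Commutation with $\mathscr{P}$ forces each $Q_{ij}$ to commute with every spectral projection of $N_{\mu}$, hence $Q_{ij} \in \{N_{\mu}\}'$, so each $Q_{ij}$ is multiplication by some function in $L^{\infty}(\mu)$. Next I would expand $QT = TQ$ entrywise, in the spirit of the commutant calculations of Examples 2.1 and 2.2. Since all $Q_{ij}$ already commute with $N_{\mu}$, the $(1,1)$ entry collapses to $Q_{21}=0$ and the $(1,2)$ entry to $Q_{11}=Q_{22}$, so $Q$ takes the upper-triangular form $\bigl(\begin{smallmatrix} Q_{11} & Q_{12} \\ 0 & Q_{11} \end{smallmatrix}\bigr)$ with both entries in $L^{\infty}(\mu)$.

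Finally, I would impose $Q^{2}=Q$: this yields $Q_{11}^{2}=Q_{11}$, so $Q_{11}=M_{\chi_F}$ for some Borel set $F$, together with $2Q_{11}Q_{12}=Q_{12}$, i.e.\ $M_{2\chi_F-1}Q_{12}=0$. Since $|2\chi_F-1|\equiv 1$ a.e., the multiplication operator $M_{2\chi_F-1}$ is invertible and $Q_{12}=0$, yielding $Q = M_{\chi_F} \oplus M_{\chi_F} \in \mathscr{P}$. The main obstacle I anticipate is precisely this last algebraic step: the presence of the factor $2\chi_F-1$, which is bounded away from zero, is what distinguishes the present case from Example 2.2, where the diagonal blocks $N_{\mu}$ and $-\tfrac{1}{2}N_{\mu}$ produce a Sylvester-type relation allowing $Q_{12}$ to have unbounded norm. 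Once maximality of $\mathscr{P}$ is in hand, Theorem 1.2 delivers that $T$ is similar to a direct integral of strongly irreducible operators; the resulting decomposition is the expected $T = \int^{\oplus}_{\textrm{spt}(\mu)} \bigl(\begin{smallmatrix}\lambda & 1 \\ 0 & \lambda\end{smallmatrix}\bigr)\,\mu(d\lambda)$ of $T$ into $2\times 2$ Jordan blocks, each of which is strongly irreducible by the classical example cited in the introduction.
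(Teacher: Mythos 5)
Your proposal is correct and follows essentially the same route as the paper: both exhibit the spectral projections $E^{(2)}(\sigma)=M_{\chi_E}\oplus M_{\chi_E}$ as the candidate bounded maximal abelian set of idempotents, reduce an arbitrary commuting idempotent $Q$ to an upper-triangular matrix of $L^\infty(\mu)$-multiplications with equal diagonal entries, and kill the off-diagonal entry from $(2M_{\chi_F}-I)Q_{12}=0$ (the paper squares the factor to get the identity; you observe directly that it is invertible, which is the same point). No gaps.
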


\begin{proof}
By Theorem 1.2, we only need to prove that there exists a bounded
maximal abelian set of idempotents in $\{T\}'$. Let $E(\cdot)$ be a
spectral measure of $N_{\mu}$ and $\mathscr {P}$ be the set of all
the spectral projections of $N_{\mu}$. We want to prove that
$\mathscr {P}^{(2)}$ is a bounded maximal abelian set of idempotents
in the commutant of $T$.

Every spectral projection of $N^{(2)}_{\mu}$ has the form
$$E^{(2)}(\sigma)= \left(\begin{array}{cc} E(\sigma)&0\\
0&E(\sigma) \end{array}\right),$$ where $\sigma$ is a Borel subset
of $\sigma(N_{\mu})$.

Let $Q$ be an idempotent in $\{T\}'$ which commutes with every
projection in $\mathscr {P}^{(2)}$. Then $Q$ commutes with
$N^{(2)}_{\mu}$. We obtain
$$QR=Q(T-N^{(2)}_{\mu})=(T-N^{(2)}_{\mu})Q=RQ.$$

By $QN^{(2)}_{\mu}=N^{(2)}_{\mu}Q$, we have that $Q$ can be
expressed as a $2\times 2$ matrix,
$$Q=\left(\begin{array}{cc}M_{\phi_{11}}&M_{\phi_{12}}\\
M_{\phi_{21}}&M_{\phi_{22}}\end{array}\right),\quad \phi_{ij}\in
L^{\infty}(\mu).$$

The equation $QR=RQ$ becomes
$$\left(\begin{array}{cc}M_{\phi_{11}}&M_{\phi_{12}}\\
M_{\phi_{21}}&M_{\phi_{22}}\end{array}\right)
\left(\begin{array}{cc}0&I\\
0&0\end{array}\right)=
\left(\begin{array}{cc}0&I\\
0&0\end{array}\right)
\left(\begin{array}{cc}M_{\phi_{11}}&M_{\phi_{12}}\\
M_{\phi_{21}}&M_{\phi_{22}}\end{array}\right),$$
$$\left(\begin{array}{cc}0&M_{\phi_{11}}\\
0&M_{\phi_{21}}\end{array}\right) = \left(\begin{array}{cc}M_{\phi_{21}}&M_{\phi_{22}}\\
0&0\end{array}\right).$$

We obtain $M_{\phi_{11}}=M_{\phi_{22}}$ and $M_{\phi_{21}}=0$.

By $Q^{2}=Q$, we have $M^{2}_{\phi_{11}}=M_{\phi_{11}}$ and
$$\left\{\begin{array}{crcl}
&M_{\phi_{11}}M_{\phi_{12}}+M_{\phi_{12}}M_{\phi_{22}}&=&M_{\phi_{12}}\\
\Rightarrow &(2M_{\phi_{11}}-I)M_{\phi_{12}}&=&0\\
\Rightarrow &(2M_{\phi_{11}}-I)^{2}M_{\phi_{12}}&=&0\\
\Rightarrow &M_{\phi_{12}}&=&0
\end{array}\right..$$

Hence $Q$ is a spectral projection of $N^{(2)}_{\mu}$. Thus the set
$\mathscr {P}^{(2)}$ is a bounded maximal abelian set of idempotents
in the commutant of $T$.

By Theorem 1.2, $T$ is similar to a direct integral of strongly
irreducible operators.
\end{proof}

\begin{lem}
If $M_{\phi}$ is a projection, then $T$, defined as above, can be
written as a direct integral of strongly irreducible operators.
\end{lem}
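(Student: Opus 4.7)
The plan is to reduce Lemma 4.5 to Lemma 4.4 and Corollary 4.2 by splitting the measure $\mu$ according to the support of the projection $M_\phi$. Since $M_\phi$ is a projection on $L^2(\mu)$, there is a Borel set $\sigma\subseteq\textrm{spt}(\mu)$ with $\phi=\chi_\sigma$ $\mu$-a.e. Put $\mu_1=\mu|_\sigma$ and $\mu_2=\mu|_{\sigma^c}$, so $L^2(\mu)\cong L^2(\mu_1)\oplus L^2(\mu_2)$ canonically, and permuting the four resulting summands yields a unitary $U:L^2(\mu)^{(2)}\to L^2(\mu_1)^{(2)}\oplus L^2(\mu_2)^{(2)}$.

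Under $U$, the operator $N^{(2)}_\mu$ transforms to $N^{(2)}_{\mu_1}\oplus N^{(2)}_{\mu_2}$, and since $M_\phi$ acts as the identity on $L^2(\mu_1)$ and as zero on $L^2(\mu_2)$, the off-diagonal part $R$ transforms to $R_1\oplus 0$, where $R_1$ has single nonzero block $I_{L^2(\mu_1)}$. Hence $UTU^{-1}=T_1\oplus T_2$ with
$$T_1=N^{(2)}_{\mu_1}+\begin{pmatrix}0 & I\\ 0 & 0\end{pmatrix},\qquad T_2=N^{(2)}_{\mu_2}.$$
The operator $T_1$ is precisely the operator treated in Lemma 4.4 (for the measure $\mu_1$), which supplies a direct integral decomposition of $T_1$ into strongly irreducible operators over some partitioned measure space $(\Lambda_1,\nu_1,\{\Lambda_{1,n}\})$. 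The operator $T_2$ is normal, so Corollary 4.2 supplies a direct integral decomposition of $T_2$ into strongly irreducibles over some $(\Lambda_2,\nu_2,\{\Lambda_{2,n}\})$.

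Finally, I would merge the two decompositions by forming the disjoint union $\Lambda=\Lambda_1\sqcup\Lambda_2$ with sum measure $\nu=\nu_1+\nu_2$ and partition $\Lambda_n=\Lambda_{1,n}\cup\Lambda_{2,n}$; this is again a partitioned measure space in the sense of Section 1, because separability, $\sigma$-finiteness, regularity, and almost $\sigma$-compactness all pass to disjoint unions of two such spaces. On this merged base, $T$ appears (after the unitary $U$ and the similarity from Lemma 4.4) as a direct integral with strongly irreducible fibers. No new algebraic idea is required beyond the two cited results and the initial splitting by $\sigma$; the only mild obstacle is the bookkeeping in assembling the partitioned measure spaces, in particular taking care that the dimension partition is reassembled correctly when $\Lambda_1$ and $\Lambda_2$ happen to contribute fibers of the same finite dimension.
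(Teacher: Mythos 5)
Your proof is correct, and its first half coincides with the paper's: both identify $\phi$ with a characteristic function on a Borel set $\Delta$, split $\mu$ into $\mu_1=\mu|_{\Delta}$ and $\mu_2=\mu|_{\Delta^c}$, and rearrange $T$ into $T_1\oplus T_2$ with $T_1$ the block operator of Lemma 4.4 built over $\mu_1$ and $T_2=N_{\mu_2}^{(2)}$. Where you diverge is in how you finish. The paper stays inside the framework of Theorem 1.2: it computes $\{T\}'=\{X_1\oplus X_2: X_i\in\{T_i\}'\}$ and exhibits $\mathscr{P}_1^{(2)}\oplus\mathscr{P}_2\oplus\mathscr{P}_2$ as a single bounded maximal abelian set of idempotents (in fact of projections) in $\{T\}'$, then invokes the Main Theorem once for the whole operator; this is also the template reused verbatim in the proof of Proposition 4.3. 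You instead treat the two summands separately by citing Lemma 4.4 and Corollary 4.2 and then glue the resulting direct integrals over the disjoint union of the base spaces. Your route is more modular and avoids recomputing a commutant, at the cost of needing the (routine, but nowhere stated in the paper) fact that a direct sum of two direct integrals with strongly irreducible fibers is again such a direct integral over the disjoint union of the partitioned measure spaces; your remarks on why the disjoint union is again a partitioned measure space cover this adequately. One small caveat: since Lemma 4.4 only asserts that $T_1$ is \emph{similar to} a direct integral, your argument literally yields ``similar to'' rather than ``can be written as''; the paper's own proof has the same looseness (it too concludes with similarity), and in both arguments the similarity is actually unnecessary because the relevant idempotents are already projections.
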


\begin{proof}
If $M_{\phi}$ is a projection, then there exists a Borel subset
$\Delta$ of $\sigma(N_{\mu})$ such that $\phi$ is the characteristic
function on $\Delta$.

Write $\mu_{1}\equiv\mu|_{\Delta}$ and
$\mu_{2}\equiv\mu|_{\sigma(N_{\mu})\backslash\Delta}$. We may
consider $N_{\mu}=N_{\mu_{1}}+N_{\mu_{2}}$, where $N_{\mu_{1}}$
acting on $L^2(\Delta)$, $N_{\mu_{2}}$ on
$L^2(\sigma(N_{\mu})\backslash\Delta)$. Let $\mathscr {P}_{i}$ be
the set of all the spectral projection of $N_{\mu_{i}}$. We prove
that the set $\mathscr {P}^{(2)}_{1}\oplus\mathscr
{P}_{2}\oplus\mathscr {P}_{2}$ is a bounded maximal abelian set of
idempotents in $\{T\}'$.

Write $T=T_{1}\oplus T_{2}$, where
$$T_{1}=\left(\begin{array}{cc}
N_{\mu_{1}}&I\\
0&N_{\mu_{1}}
\end{array}\right)
\begin{array}{c}
L^{2}(\mu_{1})\\
L^{2}(\mu_{1})\\
\end{array},\
T_{2}=\left(\begin{array}{cc}
N_{\mu_{2}}&0\\
0&N_{\mu_{2}} \end{array}\right)
\begin{array}{c}
L^{2}(\mu_{2})\\
L^{2}(\mu_{2})\\
\end{array}.
$$

We can calculate that $$\{T\}'=\{X_1\oplus X_2:X_1\in\{T_1\}',
X_2\in\{T_2\}'\}.$$

Meanwhile, every idempotent $Q$ in $\{T\}'$ is of the form
$Q=Q_1\oplus Q_2$. If $Q$ commutes with every projection in
$\mathscr {P}^{(2)}_{1}\oplus\mathscr {P}_{2}\oplus\mathscr
{P}_{2}$, then we have $Q_1\in \mathscr {P}^{(2)}_1$ and $Q_2\in
\mathscr {P}_2\oplus\mathscr {P}_{2}$.  Thus $\mathscr
{P}^{(2)}_{1}\oplus\mathscr {P}_{2}\oplus\mathscr {P}_{2}$ is a
bounded maximal abelian set of idempotents in $\{T\}'$.

Therefore, by Theorem 1.2, $T$ is similar to a direct integral of
strongly irreducible operators.
\end{proof}

\begin{proof}[Proof of Proposition 4.3] For $\phi\in L^{\infty}(\mu)$,
denote by $\Delta$ the Borel subset of $\sigma(N_{\mu})$ such that
$\phi(\lambda)\neq 0$ for almost every $\lambda$ in $\Delta$ and
$\phi(\lambda)=0$ for almost every $\lambda$ in
$\sigma(N_{\mu})\backslash\Delta$. Let $\phi_{1}$ be
$\phi|_{\Delta}$. As the above lemma, we can write $T=T_{1}\oplus
T_{2}$, where
$$T_{1}=\left(\begin{array}{cc}
N_{\mu_{1}}&M_{\phi_{1}}\\
0&N_{\mu_{1}}
\end{array}\right)
\begin{array}{c}
L^{2}(\mu_{1})\\
L^{2}(\mu_{1})\\
\end{array},\
T_{2}=\left(\begin{array}{cc}
N_{\mu_{2}}&0\\
0&N_{\mu_{2}} \end{array}\right)
\begin{array}{c}
L^{2}(\mu_{2})\\
L^{2}(\mu_{2})\\
\end{array}.$$

Then by a similar calculation, we obtain that $\mathscr
{P}^{(2)}_{1}\oplus\mathscr {P}_{2}\oplus\mathscr {P}_{2}$ is a
bounded maximal abelian set of idempotent in the commutant of $T$.
\end{proof}

If we assume $$R=\left(\begin{array}{cc}0&M_{\phi_{12}}\\
M_{\phi_{21}}&0\end{array}\right),\ \phi_{12}\phi_{21}=0,\
\phi_{ij}\in L^{\infty}(\mu),$$ then $R^2=0$. By a similar method,
we can obtain that $T=N^{(2)}_{\mu}+R$ is similar to a direct
integral of strongly irreducible operators.

In this way, we can construct a large collection of spectral
operators in
$$M_n(L^{\infty}(\mu))\subset \mathscr
{L}(L^{2}(\mu)^{(n)}),$$ which are similar to direct integrals of
strongly irreducible operators. But there exist operators in
$M_n(L^{\infty}(\mu))$ which are not similar to direct integrals of
strongly irreducible operators. In \S 2, we formulate such examples.

As applications, by Chapter 3 of \cite{Jiang_5}, we have the
following corollaries.

\begin{cor}
Every Cowen-Douglas operator is similar to a direct integral of
strongly irreducible operators.
\end{cor}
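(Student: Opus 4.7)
The plan is to derive this corollary directly from the Main Theorem (Theorem 1.2): it suffices to produce, for every Cowen-Douglas operator $T$, a bounded maximal abelian set of idempotents in the commutant $\{T\}'$. The input I would draw from Chapter 3 of \cite{Jiang_5} is the strongly irreducible decomposition theorem for Cowen-Douglas operators, which asserts that every such $T$ is similar to a finite direct sum
$$T \sim T_1^{(n_1)} \oplus T_2^{(n_2)} \oplus \cdots \oplus T_k^{(n_k)},$$
where each $T_i$ is strongly irreducible (and itself a Cowen-Douglas operator), and distinct $T_i$'s are pairwise non-similar. A companion fact from the same source is that the commutant $\{T_i\}'$ of a strongly irreducible Cowen-Douglas operator is a Banach algebra whose quotient by the Jacobson radical is isomorphic to $\mathbb{C}$; in particular, its only idempotents are $0$ and $I$.

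After performing the similarity, I may assume $T = T_1^{(n_1)} \oplus \cdots \oplus T_k^{(n_k)}$, so that $\{T\}' \cong \bigoplus_{i=1}^{k} M_{n_i}(\{T_i\}')$. Inside each $M_{n_i}(\mathbb{C}I) \subset M_{n_i}(\{T_i\}')$, I fix the diagonal scalar matrix units $E_{1}^{(i)}, \ldots, E_{n_i}^{(i)}$, and let $\mathscr{P}$ be the Boolean algebra generated by all of the $E_j^{(i)}$ for $1 \le i \le k$ and $1 \le j \le n_i$. Any idempotent $Q \in \{T\}'$ commuting with every element of $\mathscr{P}$ must be block-diagonal on each isotypic block, with diagonal entries lying in $\{T_i\}'$; the local nature of $\{T_i\}'$ then forces each such diagonal entry to be $0$ or $I$, whence $Q \in \mathscr{P}$. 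Thus $\mathscr{P}$ is a maximal abelian set of idempotents, and it is visibly bounded (and can even be made self-adjoint by a further similarity supplied by Lemma 3.2). Theorem 1.2 now yields the conclusion.

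The chief obstacle is not in the block-matrix bookkeeping but in the two structural inputs borrowed from \cite{Jiang_5}: the finiteness of the strongly irreducible decomposition of a Cowen-Douglas operator, and the local structure of the commutant of a strongly irreducible Cowen-Douglas operator. Both of these are proved via the geometry of the holomorphic bundle associated to $T$ and its $K$-theoretic invariants. Once they are in hand, the corollary follows immediately from the Main Theorem.
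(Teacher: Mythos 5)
Your argument reaches the right conclusion, but it takes a detour the paper does not take, and it contains one inaccurate side claim. The paper offers no proof beyond the citation: Chapter 3 of \cite{Jiang_5} already shows that every Cowen-Douglas operator is similar to a \emph{finite} direct sum $T_1^{(n_1)}\oplus\cdots\oplus T_k^{(n_k)}$ of strongly irreducible (Cowen-Douglas) operators, and a finite direct sum is itself a direct integral (take $\Lambda$ a finite set with counting measure), so the corollary is immediate and Theorem 1.2 is not needed at all. Your route --- building a bounded maximal abelian set of idempotents from the coordinate projections of that decomposition and then invoking the Main Theorem --- is logically valid, but it re-derives through the general machinery something you already have in hand the moment you quote the finite decomposition theorem. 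The inaccuracy: the isomorphism $\{T\}'\cong\bigoplus_i M_{n_i}(\{T_i\}')$ is false in general, because pairwise non-similar strongly irreducible Cowen-Douglas operators can admit nonzero intertwiners (e.g.\ the adjoints of the Hardy and Bergman shifts, both in $B_1(\mathbb{D})$, are non-similar yet intertwined by the adjoint of the inclusion $H^2\hookrightarrow L^2_a$), so $\{T\}'$ may contain off-diagonal blocks between distinct isotypic summands. Fortunately this is not load-bearing: an idempotent $Q$ commuting with every coordinate projection $E_j^{(i)}$ satisfies $E_l Q E_m = E_l E_m Q = 0$ for $l\neq m$, hence is diagonal with each diagonal entry a (necessarily trivial) idempotent in $\{T_i\}'$, and your maximality conclusion survives; likewise the ``local algebra'' structure of $\{T_i\}'$ is more than you need --- strong irreducibility of $T_i$ alone gives triviality of idempotents in $\{T_i\}'$. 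So the proof works, but you should correct the commutant claim and note that the Main Theorem is dispensable here.
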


\begin{cor}
Operators which are similar to direct integrals of strongly
irreducible operators form a dense subset of $\mathscr {L}(\mathscr
{H})$ in the operator norm.
\end{cor}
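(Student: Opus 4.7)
The plan is to deduce this density statement by combining Corollary 4.6 with a classical approximation theorem from Chapter 3 of \cite{Jiang_5}. That reference establishes that operators which can be written as finite orthogonal direct sums of Cowen-Douglas operators (together with possible finite-dimensional Jordan summands) form a norm-dense subset of $\mathscr{L}(\mathscr{H})$. Once this input is granted, the corollary follows by routine bookkeeping.

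More precisely, fix $A\in\mathscr{L}(\mathscr{H})$ and $\varepsilon>0$. Using the cited density result, choose an operator $B=C_{1}\oplus C_{2}\oplus\cdots\oplus C_{k}$, with each $C_{i}$ a Cowen-Douglas operator, such that $\|A-B\|<\varepsilon$. By Corollary 4.6, for each $i$ there is an invertible $X_{i}$ with
$$X_{i}C_{i}X_{i}^{-1}=\int^{\oplus}_{\Lambda_{i}}C_{i}(\lambda)\,\mu_{i}(d\lambda),$$
where $C_{i}(\lambda)$ is strongly irreducible almost everywhere on the partitioned measure space $\Lambda_{i}$. Setting $X=X_{1}\oplus\cdots\oplus X_{k}$, the conjugated operator $XBX^{-1}$ is the orthogonal direct sum of these $k$ direct integrals. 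Forming the disjoint union $(\Lambda,\mu)=\bigsqcup_{i=1}^{k}(\Lambda_{i},\mu_{i})$ with the naturally inherited partitioned-measure structure, one verifies that this direct sum itself has the form $\int^{\oplus}_{\Lambda}C(\lambda)\,\mu(d\lambda)$ with $C(\lambda)$ strongly irreducible $\mu$-almost everywhere. Hence $B$ is similar to a direct integral of strongly irreducible operators, and since $\|A-B\|<\varepsilon$, density follows.

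The main obstacle is purely the invocation of that approximation theorem: everything nontrivial is packaged inside the density result from \cite{Jiang_5}, which itself rests on deep work of Apostol, Fialkow, Foia\c{s}, Voiculescu, Herrero, and others on similarity orbits and approximate spectral decompositions. Once that result is quoted, the remainder is formal: apply Corollary 4.6 componentwise, and observe that a finite orthogonal direct sum of direct integrals of strongly irreducible operators is again a direct integral of strongly irreducible operators over the disjoint union of the underlying partitioned measure spaces.
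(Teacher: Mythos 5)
Your argument matches the paper's intended derivation: the paper gives no proof beyond the phrase ``by Chapter 3 of \cite{Jiang_5}'', and what it has in mind is precisely your reduction --- approximate any operator in norm by one similar to a finite direct sum of Cowen--Douglas operators (the approximate strongly irreducible decomposition theorem of that chapter), apply Corollary 4.5 to each summand, and assemble the resulting direct integrals over the disjoint union of the underlying partitioned measure spaces. The only loose end is that the finite-dimensional Jordan summands you mention when quoting the approximation theorem are silently dropped from your formal argument; this is harmless, since Jordan blocks are strongly irreducible and hence already direct integrals over point masses, but it should be said.
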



\subsection*{Acknowledgment}
The authors are grateful to Professor Edward Azoff for his kind
explanation of certain details in \cite{Azoff} and for his helpful
discussion. The corresponding author also wants to express his
gratitude to Professor George Elliott and Professor Junsheng Fang
for their suggestions on writing this paper.

\end{document}